\numberwithin{equation}{section}
\newtheorem{thm}{Theorem}[section]
\newtheorem{prop}[thm]{Proposition}
\newtheorem{defn}{Definition}[section]
\newtheorem{rem}{Remark}[section]
\newtheorem{cor}[thm]{Corollary}
\newtheorem{lem}[thm]{Lemma}
\newtheorem{exam}{Example}
\def\R{\mathbb{R}}
\def\B{\mathbb{B}}
\def\M{\mathcal {M}}
\def\N{\mathcal {N}}
\def\T{\mathcal {T}}
\def\H{\mathcal {H}}
\def\C{\mathcal {C}}
\def\U{\mathcal {U}}
\def\K{\mathcal {K}}
\def\gr{\text{gr }}
\def\dist{\text{dist}}
\def\co{\text{co }}
\def\ae{\text{a.e. }}
\def\Esc{\text{Esc }}
\def\proj{\text{proj}}
\def\bdry{\text{bdry }}
\def\epi{\text{epi }}
\def\dom{\text{dom }}
\def\ext{\text{ext}}
\def\cone{\text{cone }}
\def\iint{\text{int }}
\def\ri{\text{\em{r-int} }}
\def\rb{\text{\em{r-bdry} }}
\def\meas{\text{\em{meas}}}
\def\Klim{\text{\em{Klim}}}
\def\bd{\begin{displaystyle}}
\def\ed{\end{displaystyle}}
\begin{document}	

\title{\bf{Flow Invariance on stratified domains}}
\author{R. C. Barnard} 
\address{Department of Mathematics, RWTH Aachen University, Aachen, Germany }
\email{barnard@mathcces.rwth-aachen.de}
\thanks{ First author supported by the VIGRE grant at LSU, DMS-0739382}
\author{P. R. Wolenski}
\address{Department of Mathematics, Louisiana State University, Baton Rouge, Louisiana, USA}
\email{wolenski@math.lsu.edu}

\maketitle


{\footnotesize
\noindent
\textbf{Abstract.}  This paper studies conditions for invariance of dynamical systems on stratified domains as originally introduced by Bressan and Hong.  We establish Hamiltonian conditions for both weak and strong invariance of trajectories on systems with non-Lipschitz data. This is done via the identification of a new multifunction, the essential velocity multifunction.  Properties of this multifunction are investigated and used to establish the relevant invariance criteria.
\\[3pt]

\textbf{Keywords.} Stratified domains, Proximal subgradients, Strong invariance, Weak invariance. \\[3pt]
}

\vskip.2in

\section{Introduction}
\label{sec:intro}
This paper is concerned with characterizing strong and weak invariance of a dynamical system with non-Lipschitz data in the form of a so-called stratified system; this form was introduced by Bressan and Hong \cite{BH07}.  Our dynamical system takes the form of a differential inclusion
\begin{align*}\tag*{$(DI)_{G}$}
\qquad&\begin{cases}
\dot x(t)\in G\bigl(x(t)\bigr)\;\ae t\in [0,T]  \\
x(0)=x.
\end{cases}
\end{align*}
Stratified systems will be defined precisely in the next section, but to speak summarily, these are systems endowed with a layered structure of subdomains, each having their own dynamics that not only allow, but encourage movement in low dimensional submanifolds.  One can envisage a stratified system as a collection of \lq\lq highways\rq\rq\ on which rapid and efficient movement can be exploited in relation to the surrounding \lq\lq countryside\rq\rq.  This contrasts with previous studies by Soravia \cite{Sor02} and Camilli and Siconolfi \cite{CamSci03} who studied problems with much weaker assumptions but requiring relevant trajectories being unable to remain in a set of discontinuity for nontrivial time intervals.

Our approach to characterizations of flow invariance will closely follow the development in \cite{WZ98}.  In fact, the main difference between that and the current development is in the characterization of Strong Invariance (SI) (a new characterization of weak invariance is also required since a new Hamiltonian is utilized).  Recall that for dynamics $G$ and a closed set $\C\subseteq\R^N$, $G$ is SI on $\C$ provided every trajectory $x(\cdot)$ of (DI)$_G$ with $x\in\C$ has $x(t)\in\C$ for all $t\in[0,T]$.  With a Lipschitz hypothesis on the dynamics, an infinitesimal characterization of SI was first proved by Clarke \cite{FHC75} in tangential form, and later by Krastanov \cite{Kra95} in normal form.  See also \cite{CLSW95, BOOK98}.  The Lipschitz hypothesis was invoked in \cite{WZ98} precisely to exploit this characterization.  The first result (to our knowledge) characterizing SI for non-Lipschitz systems is contained in \cite{DRW05}, where the dynamics are one-sided (dissipative) Lipschitz.  We prove a new characterization for SI in Theorem~\ref{SI} below for discontinuous dynamics that satisfy an {\it Euler arc property}.  Moreover, we obtain a growth estimate of the distance back to $\C$ along a trajectory, a result that plays an important role later in the paper.

It is not clear whether the dynamics that arise from a stratified system have the Euler arc property globally, and so Theorem~\ref{SI} is not applied directly.  However, each subsystem restricted to its subdomain satisfies the property, and this turns out to be sufficient to prove the characterization on the entire domain.  A major complication that needs to be overcome is how to handle trajectories that cross manifolds infinitely often in finite time, exhibiting a type of \lq\lq Zeno\rq\rq\ effect that often plagues hybrid system theory; Example~\ref{Zeno} in Section~\ref{sec:Examples} presents a means to construct these types of complicated trajectories.  Inherent in the stratified apparatus is a structural condition on the dynamics providing the environment from which such arcs can be approximated by more manageable ones.  One of the key properties of stratified trajectories is that they cannot move {\it immediately} into another subdomain of the same or lower dimension, thus giving a particular form to how the Zeno effect can occur.  The proofs of Lemma~\ref{two subdomains} and Theorem~\ref{WI thm} contain details on how to construct approximate trajectories and deal with Zeno-type arcs.

The stratified system in this paper has additional structure beyond what was originally proposed in \cite{BH07}.  Specifically, we require the closure of each subdomain to be {\it proximally smooth} and to be {\it relatively wedged}.  Loosely speaking, proximal smoothness of the subdomains says there are no \lq\lq inward\rq\rq\ corners, which in turn implies the graph of the normal cone has a desirable closure property.  The wedged assumption means that none of the \lq\lq outward\rq\rq\ corners are cusps, which is saying every normal cone on the boundary has a relative pointedness condition.  Equivalently, this means the dimension of the relative interior of the tangent {\it cone} at each point throughout the boundary is the same as the dimension of the subdomain.  The wedged assumption is {\it relative} in the sense that, in local coordinates, it is active in the same subspace as the embedded submanifold.  The relative pointedness condition appears to be a new concept and is fully explained and developed in the appendix.  These additional properties provide the framework to describe the relative boundary of the tangent cone, the understanding of which is crucial to identifying the essential velocity multifunction $G^\sharp$.  The values of $G^\sharp$ are exactly those velocities that can be realized by trajectories of the system; see Proposition~\ref{prop of G Sharp} below.

Having identified $G^{\sharp}$ as the multifunction to characterize strong invariance, we must then assure that it is not too small to characterize weak invariance.  This turns out to be more of an issue than perhaps it first appears, and is confronted in Section \ref{WI stratified}.

The paper is organized as follows.  We begin in Section \ref{sec:introstrat} by describing the stratified formalism, and proceed in Section \ref{More Prelim} to provide further background and explain the relevant terms.  We relegate to the appendix the more complicated nonsmooth analysis involved in describing normal and tangent cones, and describe in detail there the \lq\lq relative wedged\rq\rq\ condition.  Section \ref{sec:SIsuff} is devoted to proving a new characterization of strong invariance for possibly discontinuous dynamics.  Strong invariance is characterized for stratified systems in Section \ref{sec:stratSI}, where the essential velocity multifunction $G^{\sharp}$ is defined.  The weak invariance characterization using $G^{\sharp}$ is given in Section \ref{WI stratified}.  Two examples are provided in Section \ref{sec:Examples}, the first providing a means to construct Zeno-type arcs, and the second constructing a type of Zeno arc with a somewhat surprising property.  Finally, Section \ref{sec:Conclusion} summarizes our approach and describes and suggests ongoing future research directions.

\section{Stratified systems}
\label{sec:introstrat}
We now describe a stratified dynamical system that was introduced by Bressan and Hong \cite{BH07}.   The closure of a set $\M$ is written as $\overline\M$.

\subsection{Stratified domains}
\label{subsec1.1}
We are given a finite collection of smooth (meaning at least $C^2$) manifolds $\{\M_1,\dots,\M_M\}$ embedded in $\R^N$ such that
\begin{itemize}
\item $\mathbb{R}^N=\bigcup_{i=1}^M \M_i$ and $\M_i\cap\M_j=\emptyset$ when $i\not= j$;
\item  If $\M_j\cap\overline{\M}_i\neq\emptyset$, then $\M_j\subset\overline{\M}_i$;
\item Each $\overline{\M}_i$ is proximally smooth of radius $\delta$;
\item Each $\overline \M_i$ is relatively wedged.
\end{itemize}
The last two conditions were not included in \cite{BH07}, but will be required in our study.  Their precise meaning and importance will be fully explained in Section~\ref{More Prelim}.  Such a collection is called a (proximally smooth and wedged) \textit{stratified domain}, and its components $\M_i$ are called stratified subdomains.  The dimension of $\M_i$ is designated by $d_i$.

\smallskip
\noindent
\begin{exam}\label{counterexample}  {\rm A very simple example is depicted in Figure~\ref{SimpleExample} with $N=2$ and $M=5$.  Here, the only two dimensional manifolds are the upper and lower open half spaces ($\M_1$ and $\M_2$), the only one dimensional ones are the positive and negative $x_1$-axes ($\M_3$ and $\M_4$), and the only zero dimensional manifold is the origin ($\M_5$).

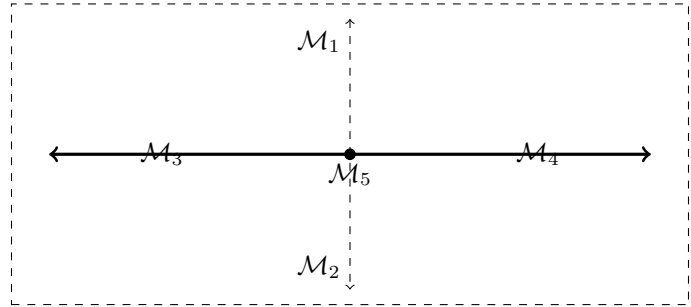
\begin{figure}[h]
\centering
\begin{tikzpicture}
\draw[dashed, thin] (-4.5,-2) -- (-4.5,2) -- (4.5,2) -- (4.5,-2) -- cycle;
\draw[<->,very thick] (-4,0) -- (4,0);
\draw[<->,dashed] (0,1.8) -- (0,-1.8);
\draw[fill] (0,0) circle(2pt);
\path (0,1.5) node [left] {$\M_1$};
\path (0,-1.5) node [left] {$\M_2$};
\path (-2.5,0) node {$\M_3$};
\path (2.5,0) node {$\M_4$};
\path (0,0) node [below] {$\M_5$};
\end{tikzpicture}\\
\caption{A stratified domain}
\label{SimpleExample}
\end{figure}
}
\end{exam}

\subsection{Stratified dynamics}

We next state dynamic hypotheses imposed on each of the subdomains.  Let $\M\subseteq \R^N$ be an embedded manifold, and denote by $\T_{\M}(x)$ the usual {\it tangent space} of $\M$ at $x\in\M$.  Suppose $\Gamma:\M\rightrightarrows\R^N$
is a multifunction.  The following is a collection of Standard Hypotheses commonly imposed in differential inclusion theory:
\[
{(SH)}
\quad
\begin{cases}
{\text{(i)}}\; &\forall\;x\in \M,\, \Gamma(x) \text{ is a nonempty, convex, compact set }\\
&\qquad \text{satisfying $\Gamma(x)\subseteq \T_{\M}(x)$; } \\
{\text{(ii)}} &\text{The graph }\gr \Gamma:=\bigl\{(x,v):v\in \Gamma(x)\bigr\}\text{ is a closed set relative }\\
&\qquad \text{to }\M\times\R^N; \\
{\text{(iii)}} &\exists\;r>0 \text{ so that }\max\{|v|:v\in \Gamma(x)\}\leq r(1+|x|).
\end{cases}
\]
Associated with $(\M,\Gamma)$ is the differential inclusion
\begin{equation}\tag*{(DI)$_{\Gamma}$}
\begin{cases}
\dot x(t)\in \Gamma\bigl(x(t)\bigr)\quad\ae t\in[0,T) \\
x(0)=x,
\end{cases}
\end{equation}
which, with $x\in\M$ and under the assumptions in (SH), has at least one solution $x(\cdot)$ defined on a nontrivial interval $[0,T)$.  Note the tangent space inclusion in (SH)(i) implies all solutions of (DI)$_\Gamma$ remain in $\M$.  If $T=+\infty$ or $x(t)$ approaches $\overline\M\backslash \M$ as $t\nearrow T$, then $T$ is called the {\it escape time} of $x(\cdot)$ from $\M$ and is denoted by $\Esc \bigl(x(\cdot),\M,\Gamma\bigr)$.  Every trajectory can be extended to an interval of maximal length, and thus if not otherwise stated, we assume all trajectories are defined on the maximal interval of existence.

A further hypothesis often invoked in standard differential inclusion theory is a strengthening of (SH)(ii) by requiring a Lipschitz property on bounded subsets of $\M$ with respect to the Hausdorff metric.  This means for each $r>0$, there exists a constant $k_r>0$ so that
\begin{itemize}
\item[(iv)]
\quad $ x,\,y\in\M\cap r\,\B\quad\Rightarrow\quad\dist_{\H}\bigl(\Gamma(x),\Gamma(y)\bigr)\leq k_r\,\|x-y\|,$
\end{itemize}
where $r\,\B$ denotes the ball centered at the origin of radius $r$, and $\dist_{\H}$ is the usual Hausdorff distance between compact sets.  Condition (iv) is equivalent to
\[
\bigl|h_{\Gamma}(x,\zeta)-h_{\Gamma}(y,\zeta)\bigr|\leq
k_r\,\|\zeta\|\,\|x-y\|\quad\forall x,\,y\in\M\cap r\,\B,\,\zeta\in\R^n,
\]
where $h_{\Gamma}:\R^N\times\R^N\to\R$ is the (minimized) Hamiltonian associated with $\Gamma$ given by
\[
h_{\Gamma}(x,\zeta)=\inf_{v\in\Gamma(x)}\langle v,\zeta\rangle.
\]
A manifold and multifunction $(\M,\Gamma)$ for which (i)-(iv) hold is said to satisfy (SH)$_+$.  Notice that the Lipschitz condition holds throughout any {\it bounded} subset of $\M$, and hence such a multifunction $\Gamma$ can be extended to the closure $\overline \M$ while maintaining the Lipschitz property over $\overline{\M}$.  We denote this extension by $\overline \Gamma$.

Now, a stratified domain $\{\M_1,\dots,\M_M\}$ is given, and associated to each manifold $\M_i$ is a given multifunction $F_i:\M_i\rightrightarrows \R^N$.  Each $(\M_i,F_i)$ is assumed to satisfy (SH)$_+$.  Since there are only finitely many objects, we can choose the same constants in (i)-(iv) for all of them.   Moreover, due to (iii), all of the analysis is essentially local (i.e. takes place in an a priori bounded set), and so for bookkeeping purposes we refer to only one Lipschitz constant $k$ in (iv) and drop the subscript $r$.

The {\it basic velocity }multifunction $F:\R^n\rightrightarrows\R^n$ is defined by
\[
F(x)=F_i(x)\quad \text{whenever}\quad x\in\M_i.
\]
Observe that $F$ does not necessarily satisfy (SH)(ii) on $\R^N$, and hence no general existence theory nor desirable closure properties of its trajectories are available.  To circumvent the difficulties these theoretical shortcomings would entail, one may introduce the Filippov regularization $G:\R^n\rightrightarrows\R^n$ given by:
\[
G(x)=\bigcap_{\varepsilon >0}\overline{\text{co }}\bigcup\bigl\{F(y):\|y-x\|<\varepsilon\bigr\}.
\]
It can be easily shown that $G$ satisfies (SH), although now it is no longer the case that the Lipschitz condition (iv) will hold for $(\R^N,G)$.  By the nature of the stratification structure, one can easily derive the representation
\begin{equation}\label{G rep}
G(x)=\co\bigl\{\overline{F}_{i}(x):x\in\overline\M_i\bigr\}.
\end{equation}

\setcounter{exam}{0}
\begin{exam}(continued)  {\rm Suppose the following dynamic data, which is a simplification of the dynamics in Example 3 of \cite{BH07}, is associated with the stratified domain in Example~\ref{counterexample} (we abuse set notation here since only singleton values are involved):
\[
F_1\left(\begin{matrix}x_1\\x_2\end{matrix}\right)=\left(\begin{matrix}0\\-1\end{matrix}\right);
\;
F_2\left(\begin{matrix}x_1\\x_2\end{matrix}\right)=\left(\begin{matrix}0\\1\end{matrix}\right);
\;
F_3\left(\begin{matrix}x_1\\x_2\end{matrix}\right)=\left(\begin{matrix}1\\0\end{matrix}\right);
\;
F_4\left(\begin{matrix}x_1\\x_2\end{matrix}\right)=\left(\begin{matrix}-1\\0\end{matrix}\right).
\]
Note that $F_5$ need not be explicitly stated since $\T_{\M_5}(0)=\{0\}$, and consequently by (SC), $F_5$ must consist of only the zero vector.  This is the case for all zero dimensional manifolds.  Obviously $G$ differs from $F$ only on the $x_1$-axis, with
\[
G\left(\begin{matrix} x_1\\0\end{matrix}\right)=\begin{cases} \left\{\vec v:
 |v_1|+|v_2|\leq 1,\,v_1\geq 0\right\} &\text{if }x_1<0 \\
\left\{\vec v:
 |v_1|+|v_2|\leq 1\right\}
 &\text{if }x_1=0 \\
 \left\{\vec v:
 |v_1|+|v_2|\leq 1,\,v_1\leq 0\right\} &\text{if } x_1>0,
 \end{cases}
\]
where $\vec v=\left(\begin{matrix}v_1\\v_2\end{matrix}\right)$.  The arrows in Figure~\ref{SimpleExample2} depict the values of the basic velocities, and the shaded regions are the values of $G$ at points on the $x_1$ axis.}
\end{exam}

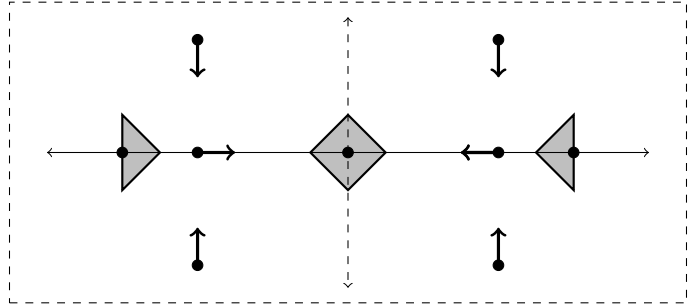
\begin{figure}
\centering
\begin{tikzpicture}
\draw[thick, fill, color=gray!50] (0,.5) -- (.5,0) -- (0,-.5) -- (-.5,0) -- cycle;
\draw[thick, fill, color=gray!50] (3,.5) -- (3,-.5) -- (2.5,0) -- cycle;
\draw[thick, fill, color=gray!50] (-3,.5) -- (-3,-.5) -- (-2.5,0) -- cycle;
\draw[thick] (0,.5) -- (.5,0) -- (0,-.5) -- (-.5,0) -- cycle;
\draw[thick] (3,.5) -- (3,-.5) -- (2.5,0) -- cycle;
\draw[thick] (-3,.5) -- (-3,-.5) -- (-2.5,0) -- cycle;
\draw[dashed, thin] (-4.5,-2) -- (-4.5,2) -- (4.5,2) -- (4.5,-2) -- cycle;
\draw[<->,thin] (-4,0) -- (4,0);
\draw[<->,dashed] (0,1.8) -- (0,-1.8);
\draw[fill] (0,0) circle(2pt);
\draw[fill] (-3,0) circle(2pt);
\draw[fill] (3,0) circle(2pt);
\draw[fill] (-2,0) circle(2pt);
\draw[->,very thick] (-2,0) -- (-1.5,0);
\draw[fill] (2,0) circle(2pt);
\draw[->,very thick] (2,0) -- (1.5,0);
\draw[fill] (-2,1.5) circle(2pt);
\draw[->,very thick] (-2,1.5) -- (-2,1);
\draw[fill] (2,1.5) circle(2pt);
\draw[->,very thick] (2,1.5) -- (2,1);
\draw[->,very thick] (2,0) -- (1.5,0);
\draw[fill] (-2,-1.5) circle(2pt);
\draw[->,very thick] (-2,-1.5) -- (-2,-1);
\draw[fill] (2,-1.5) circle(2pt);
\draw[->,very thick] (2,-1.5) -- (2,-1);
\end{tikzpicture}\\
\caption{Stratified dynamics}
\label{SimpleExample2}
\end{figure}

The multifunction $G$ is used as the dynamic data for the control problem.  For $x\in\R^N$, consider the differential inclusion:
\begin{equation}\tag*{(DI)$_{G}$}
\begin{cases}
\dot x(t)\in G\bigl(x(t)\bigr)\quad\ae t\in[0,T] \\
x(0)=x.
\end{cases}
\end{equation}

Again following \cite{BH07}, we assume that the stratified dynamics satisfy the Structural Condition
\begin{equation*}\tag{SC}
G(x)\cap \T_{\M_i}(x) = F_i(x)\quad\text{whenever}\quad x\in\M_i.
\end{equation*}
The condition (SC) plays a major role in this theory, and ensures that there does not exist a trajectory of $G$ that is not already implicit in $F$.  The following proposition makes this statement precise, and its proof is essentially contained in the argument at the bottom of page 319 in \cite{BH07}.  It is provided here for both completeness and clarity to assist in following the subsequent development.

\begin{prop}\label{G=F solutions}
Suppose $x(\cdot):[0,T]\to\R^N$ is a Lipschitz arc.  Then the following are equivalent.
\begin{itemize}
\item[(a)] $x(\cdot)$ satisfies (DI)$_{G}$;
\item[(b)] $x(\cdot)$ satisfies
\begin{equation*}
\tag*{(DI)$_F$}
\begin{cases}
\dot x(t)\in F\bigl(x(t)\bigr)\quad\text{a.e. }\in[0,T] \\
x(0)=x;
\end{cases}
\end{equation*}
\item[(c)] For each $i$, $x(\cdot)$ satisfies $x(0)=x$ and
\begin{equation*}
\tag*{(DI)$_{F_i}$}
\dot x(t)\in F_i\bigl(x(t)\bigr)\quad\text{a.e. whenever } x(t)\in\M_i.
\end{equation*}
\end{itemize}
\end{prop}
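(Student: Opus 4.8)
The plan is to establish the cycle of implications $(a)\Rightarrow(c)\Rightarrow(b)\Rightarrow(a)$. The implication $(c)\Rightarrow(b)$ is essentially a tautology: since the subdomains $\M_i$ partition $\R^N$, for every $t$ there is exactly one index $i(t)$ with $x(t)\in\M_{i(t)}$, and $F(x(t))=F_{i(t)}(x(t))$ by definition of the basic velocity multifunction; thus the family of conditions in $(c)$ reassembles into $\dot x(t)\in F(x(t))$ a.e. The same partition remark gives $(b)\Rightarrow(c)$, so in fact $(b)\Leftrightarrow(c)$ costs nothing.

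For $(b)\Rightarrow(a)$: by the representation \eqref{G rep}, whenever $x\in\M_i$ the set $\overline F_i(x)$ is one of the sets appearing in the convex hull defining $G(x)$, and $\overline F_i$ restricts to $F_i$ on $\M_i$; hence $F(x(t))=F_{i(t)}(x(t))=\overline F_{i(t)}(x(t))\subseteq G(x(t))$ for every $t$, and $(a)$ follows at once from $(b)$.

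The substantive step is $(a)\Rightarrow(c)$, where the Structural Condition (SC) together with the manifold structure does the work. Fix $i$ and set $S_i=\{t\in[0,T]:x(t)\in\M_i\}$; I will show $\dot x(t)\in F_i(x(t))$ for a.e. $t\in S_i$. Discard the null set of $t$ at which $x$ is not differentiable, or $\dot x(t)\notin G(x(t))$, or $t$ is not a Lebesgue density point of $S_i$, and fix a remaining $t_0\in S_i$. Since $t_0$ is a density point, $S_i$ accumulates at $t_0$, so there are $t_n\to t_0$ with $t_n\neq t_0$ and $x(t_n)\in\M_i$. The crux is to prove $\dot x(t_0)\in\T_{\M_i}(x(t_0))$. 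Near $x(t_0)$ write $\M_i=\{z:g(z)=0\}$ for a $C^1$ submersion $g$ with $\ker Dg(x(t_0))=\T_{\M_i}(x(t_0))$ (this uses only that $\M_i$ is a $C^1$ embedded submanifold). For all large $n$, $x(t_n)$ lies in the domain of $g$ and $g(x(t_n))=g(x(t_0))=0$, so
\[
0=Dg(x(t_0))\bigl(x(t_n)-x(t_0)\bigr)+o\bigl(|x(t_n)-x(t_0)|\bigr).
\]
Because $x(\cdot)$ is Lipschitz with constant $L$, $|x(t_n)-x(t_0)|\leq L|t_n-t_0|$, so the error term is $o(|t_n-t_0|)$; dividing by $t_n-t_0$ and letting $n\to\infty$ yields $Dg(x(t_0))\dot x(t_0)=0$, i.e. $\dot x(t_0)\in\T_{\M_i}(x(t_0))$. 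Then $\dot x(t_0)\in G(x(t_0))\cap\T_{\M_i}(x(t_0))=F_i(x(t_0))$ by (SC), completing the argument.

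I expect the only delicate point to be the tangency claim, and in particular the observation that a density point of $S_i$ supplies an approximating sequence lying in $\M_i$, so that the defining equations $g(x(t_n))=0$ can be differenced against $g(x(t_0))=0$; transferring the $o(\cdot)$ estimate from the spatial scale to the time scale via the Lipschitz bound is then routine, and the top-dimensional case $\T_{\M_i}(x(t_0))=\R^N$ is vacuous. The remaining implications are pure bookkeeping about the partition $\{\M_i\}$ and the inclusion $F_i(x)\subseteq G(x)$ read off from \eqref{G rep}.
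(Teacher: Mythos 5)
Your proposal is correct and follows essentially the same route as the paper: the trivial equivalences and the containment $F_i(x)\subseteq G(x)$ handle everything except $(a)\Rightarrow(c)$, which both you and the paper settle by taking a.e. $t$ that is a density point of $\{s:x(s)\in\M_i\}$, extracting a sequence $t_n\to t$ with $x(t_n)\in\M_i$, concluding $\dot x(t)\in\T_{\M_i}(x(t))$, and invoking (SC). The only difference is cosmetic: the paper asserts the tangency of the limiting difference quotient directly, while you justify it via a local submersion representation of $\M_i$, which is a fine (and slightly more explicit) way to make the same point.
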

\begin{proof}
It is clear that (b) and (c) are equivalent, and that these imply (a) since $F_i(x)\subseteq G(x)$ whenever $x\in\M_i$.  So assume (a) holds and we must show that (c) holds as well.

For each $i$, let $J_i:=\{t\in[0,T]:x(t)\in\M_i\}$, which is a Borel measurable set.  Let
\[
\tilde J_i:=\{t\in J_i:\dot x(t)\text { exists in }G\big(x(t)\bigr)\text{ and }t\text{ is a Lesbesgue point of }J_i\}.
\]
It is clear that $\tilde J_i$ has full measure in $J_i$.    If $t\in\tilde J_i$, then being a Lebesque point implies there exists a sequence $\{s_j\}$ so that $s_j\to t$ as $j\to\infty$ with $t\not=s_j\in\tilde J_i$ for all $j$.  In particular, $x(s_j)\in\M_i$ for all $j$, and therefore
\[
\dot x(t)
=\lim_{j\to\infty}\frac{x(s_j)-x(t)}{s_j-t}\in\T_{\M_i}\bigl(x(t)\bigr).
\]
We conclude by (a) and the assumption (SC) that
\[
\dot x(t)\in G\bigl(x(t)\bigr)\cap \T_{\M_i}\bigl(x(t)\bigr)=F_i\bigl(x(t)\bigr)\quad\forall t\in\tilde J_i,
\]
which proves (c).
\end{proof}

\section{More preliminaries}\label{More Prelim}

We now review the background in nonsmooth analysis and differential inclusion theory required in our analysis.  The relative pointedness concept appears to be new, and thus the main consequences of this condition necessitates elaboration and detailed proofs.  These will be provided in the appendix.

\subsection{Background in nonsmooth analysis}
Recall that a vector $\zeta$ is a proximal normal to a closed set $\C\subseteq\R^N$ at $c\in\C$ provided there exists $\sigma>0$ so that
\[
\langle \zeta,c'-c\rangle \leq \frac{\|\zeta\|}{2\sigma}\|c'-c\|^2\quad\forall\,c'\in \C.
\]
In such a case with $\|\zeta\|=1$,  we have $\C\cap\bigl\{c+\sigma\,\bigl[\zeta+\overline{\B}\bigr]\bigl\}=\{c\}$ and say that $\zeta$ is {\it realized} by $\sigma$.  The set of all proximal normals is a convex cone and is denoted by $\N_{\C}^P(c)$.   If $\theta:\R^N\to(-\infty,+\infty]$ is lower semicontinuous (lsc) and $x\in\dom \theta:=\{x\in\R^N:\theta(x)<\infty\}$, then the proximal subgradient $\partial_P \theta(x)$ is defined as those $\zeta\in\R^N$ satisfying $(\zeta,-1)\in\N^P_{\epi \theta}\bigl(x,\theta(x)\bigr)$, where $\epi \theta:=\{(x,r):r\geq \theta(x)\}$ is the epigraph of $\theta(\cdot)$.

A key assumption of a stratification is {\it proximal smoothness}.  Proximal smoothness was a term introduced in \cite{CSW95} but whose concept was broached much earlier.  Federer \cite{Fed59} apparently first introduced the idea under the name of \lq\lq sets with positive reach,\rq\rq\ and it has been subsequently and independently rediscovered by many authors since then.

A closed set $\C\subseteq\R^N$ is called {\it proximally smooth} of radius $\delta>0$ provided the distance function
\[
d_{\C}(x):=\inf_{c\in\C}\|c-x\|
\]
is differentiable on the open neighborhood $\C+\{\delta+\varepsilon\}\B$ of $\C$ for some $\varepsilon>0$.   The $\varepsilon$ is added to this definition to avoid small technical anomalies.  There are many equivalent statements to this property, and we content ourselves here with mentioning only one: $\C$ is proximally smooth if and only if $\N_{\C}^P(c)\not=\{0\}$ for all $c\in\bdry\C$, and all unit normals can be realized by the same constant $\sigma:=\delta$.  In this case, the proximal normal cone coincides with the Clarke normal cone $\N_{\C}(c)$, and so in discussing normal cones to proximally smooth sets, we can shorten the notation of the proximal normal cone to $\N_{\C}(c)$ and observe that $c|\!\!\!\rightrightarrows\N_{\C}(c)$ has closed graph.

With $\C$ proximally smooth, there is no need to stipulate which tangent cone is in use either, since the Bouligand and Clarke tangent cones coincide.  Recall the Bouligand {\it tangent cone} $\T_{\C}(c)$ at $c\in\C$ is defined as
\begin{equation}\label{defn of tangent}
\T_{\C}(c)=\bigl\{v:\liminf_{t\downarrow 0}\frac{d_{\C}(c+tv)}{t}=0\bigr\}
\end{equation}
and in the case of $\C$ proximally smooth (and so is the Clarke cone as well), equals the negative polar of $\N_{\C}(c)$:
\[
v\in\T_{\C}(c)\quad\Longleftrightarrow\quad \langle \zeta,v\rangle \leq 0\quad\forall \zeta\in \N_{\C}(c).
\]

If $\M$ is an embedded $C^2$ manifold, $\C:=\overline\M$, and $c\in\M$, then $\T_{\C}(c)$ agrees with the usual tangent {\it space} $\T_{\M}(c)$ to $\M$ at $c$ from differential geometry (see \cite[Proposition~1.9]{BOOK98}).  If in addition $\overline M$ is proximally smooth, then for each $x\in\overline\M$, the tangent  cone $\T_{\overline \M}(x)$ is closed and convex, and thus has a relative interior denoted by $\ri\T_{\overline\M}(x)$ (see \cite{RTRconvex70}).  Its relative boundary is defined as $\rb\T_{\overline\M}(x):=\T_{\overline\M}(x)\backslash\ri\T_{\overline\M}(x)$.

Another key assumption on the stratified data is each domain being relatively wedged.  A set $\C\subseteq\R^N$ is {\it wedged} (see \cite[p.166]{BOOK98}) if for every $x\in \bdry\C$, the (Clarke) normal cone is pointed.  If $\C=\overline\M$ is the closure of an embedded manifold $\M$, then $\C$ is relatively wedged means each such normal cone is {\it relatively pointed}.  The concepts of pointedness and relative pointedness are fully explained in the appendix.  In short, within the context of a given stratification, the new concept of $\N_{\overline\M_i}(x)$ being relatively pointed is equivalent to saying the dimension of $\ri\T_{\overline\M_i}(x)$ is $d_i$.  Its key contribution to our analysis is exposed in the next lemma, which is proven in the appendix.

\begin{lem}\label{rb of tangent cone}
If $x\in\overline\M_i\backslash\M_i$ and $v\in \rb\T_{\overline{\M}_i}(x)$, then there exists an index $j$ for which $\M_j\subseteq\overline\M_i$, $x\in\overline\M_j$, and $v\in\T_{\overline\M_j}(x)$.  Of course in this case, one has $d_j<d_i$.
\end{lem}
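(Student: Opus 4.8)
The plan is to construct a sequence of points of $\overline\M_i\ba\M_i$ converging to $x$ from the direction $v$; once such a sequence is in hand, finiteness of the stratification together with the frontier condition ($\M_j\cap\overline\M_i\neq\emptyset\Rightarrow\M_j\subseteq\overline\M_i$) will single out the index $j$. If $v=0$ this is immediate: take $j$ to be the unique index with $x\in\M_j$, so $\M_j\cap\overline\M_i\ni x$ and hence $\M_j\subseteq\overline\M_i$; since $\M_j\cap\M_i=\emptyset$ we get $\M_j\subseteq\overline\M_i\ba\M_i$, so $d_j<d_i$, while $0\in\T_{\overline\M_j}(x)$ trivially. So assume $v\neq 0$, normalized to $|v|=1$. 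Since $v$ is a tangent direction to the proximally smooth set $\overline\M_i$, one has $d_{\overline\M_i}(x+tv)/t\to 0$ as $t\downarrow 0$; fix $t_n\downarrow 0$ and points $y_n\in\overline\M_i$ with $|y_n-(x+t_nv)|\le 2\,d_{\overline\M_i}(x+t_nv)=o(t_n)$. Then $y_n\to x$, $y_n\neq x$ for $n$ large, and $(y_n-x)/|y_n-x|\to v$.

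The heart of the proof is the dichotomy that, after passing to a subsequence, $y_n\notin\M_i$ for every $n$. Suppose instead $y_n\in\M_i$ for all $n$. Then $W_n:=\T_{\M_i}(y_n)$ is a $d_i$-dimensional subspace, and by the identification of the tangent cone of $\overline\M_i$ with the tangent space of $\M_i$ at points of $\M_i$ we have $W_n=\T_{\overline\M_i}(y_n)$; passing to a further subsequence we may assume $W_n\to W$ in the Grassmannian of $d_i$-planes, with $W$ again a $d_i$-dimensional subspace. Now proximal smoothness of radius $\delta$ supplies a uniform quadratic estimate: there are constants $C,\rho>0$ (depending only on $\delta$) with $d_{\overline\M_i}(c+su)\le C s^2$ whenever $c\in\overline\M_i$, $u\in\T_{\overline\M_i}(c)$, $|u|=1$ and $0\le s\le\rho$. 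Given a unit $u\in W$, choose unit $u_n\in W_n$ with $u_n\to u$; then $d_{\overline\M_i}(y_n+su_n)\le C s^2$ for $0\le s\le\rho$, and letting $n\to\infty$ (using $y_n\to x$, $u_n\to u$, and continuity of $d_{\overline\M_i}$) gives $d_{\overline\M_i}(x+su)\le C s^2$ for $0\le s\le\rho$, hence $u\in\T_{\overline\M_i}(x)$. Thus $W\subseteq\T_{\overline\M_i}(x)$. But the relatively wedged hypothesis says exactly that $\dim\ri\T_{\overline\M_i}(x)=d_i$, so the affine hull of $\T_{\overline\M_i}(x)$ — a subspace $V$, since $0\in\T_{\overline\M_i}(x)$ — has dimension $d_i$. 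Since $W$ is a $d_i$-dimensional subspace with $W\subseteq\T_{\overline\M_i}(x)\subseteq V$, it follows that $W=V$ and therefore $\T_{\overline\M_i}(x)=V$. A subspace equals its own relative interior, so $\rb\T_{\overline\M_i}(x)=\emptyset$, contradicting $v\in\rb\T_{\overline\M_i}(x)$. This establishes the dichotomy.

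It remains to extract $j$. After the dichotomy, each $y_n$ lies in some stratum $\M_{k_n}$ with $k_n\neq i$; since there are finitely many strata, a further subsequence has $k_n\equiv j$ for a single index $j\neq i$. Then $y_n\in\M_j\cap\overline\M_i\neq\emptyset$ forces $\M_j\subseteq\overline\M_i$ by the frontier condition; $y_n\to x$ with $y_n\in\M_j$ gives $x\in\overline\M_j$; the sequence $\{y_n\}\subseteq\overline\M_j$ with $(y_n-x)/|y_n-x|\to v$ gives $v\in\T_{\overline\M_j}(x)$; and $\M_j\subseteq\overline\M_i$ together with $\M_j\cap\M_i=\emptyset$ gives $\M_j\subseteq\overline\M_i\ba\M_i$, whence $d_j<d_i$. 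This is precisely the assertion of the lemma.

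The step I expect to be the main obstacle is the dichotomy, and within it the inclusion $W\subseteq\T_{\overline\M_i}(x)$: this is where proximal smoothness enters essentially (transporting the quadratic decay of $d_{\overline\M_i}$ along tangent directions from the approximating points $y_n$ to the limiting plane $W$, which one should trace back to a uniform bound on the second fundamental form of $\M_i$), and where the relatively wedged hypothesis is indispensable to convert $W=V$ into the contradiction with $v$ lying in the relative boundary. The remaining ingredients — the $v=0$ case, the Grassmannian compactness argument, the pigeonhole on strata, and the final dimension drop — are routine given the stratification axioms and the properties of tangent and normal cones to proximally smooth sets recalled earlier.
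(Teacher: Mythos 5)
There is a genuine gap: the ``dichotomy'' at the heart of your argument is false, and the case you try to exclude is exactly the hard case the lemma is really about. Consider the stratification of $\R^2$ given by $\M_1=\{x_2>-x_1^2\}$, $\M_2=\{x_2<-x_1^2\}$, $\M_3=\{x_2=-x_1^2\}$; each closure is proximally smooth and (relatively) wedged. Take $i=1$, $x=0$, so $\T_{\overline\M_1}(0)=\{v_2\ge 0\}$ and $v=(1,0)\in\rb\T_{\overline\M_1}(0)$. Here $x+t v=(t,0)$ lies in the open stratum $\M_1$ for every $t>0$, and since these points are already in $\overline\M_1$ your near-closest points are forced to be $y_n=(t_n,0)\in\M_1$; no subsequence with $y_n\notin\M_1$ exists. (The lemma is still true there: $v$ is tangent at $0$ to the lower-dimensional stratum $\overline\M_3$, the parabola.) The flaw in your proof of the dichotomy is the ``uniform quadratic estimate'' $d_{\overline\M_i}(c+su)\le Cs^2$ for unit $u\in\T_{\overline\M_i}(c)$: proximal smoothness is an \emph{exterior} ball (normal-side) condition and gives no such interior estimate -- it already fails for convex sets (all of which are proximally smooth), e.g.\ the epigraph of $y\mapsto |y|^{3/2}$ at the origin, and in the example above no constant works uniformly at the interior points $(t_n,0)$ in the direction $(0,-1)$. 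Consequently the inclusion $W\subseteq\T_{\overline\M_i}(x)$ is also false in general: tangent cones of proximally smooth sets are not outer semicontinuous (in the example, $\T_{\overline\M_1}(y_n)=\R^2$ while $\T_{\overline\M_1}(0)$ is a half-plane); it is the \emph{normal} cone whose graph is closed, as the paper notes, and the paper's formula for tangent cones is only an inner (Kuratowski) limit of intersections.

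By contrast, the paper splits into the two cases according to whether $x+t_nv$ leaves $\M_i$ infinitely often. Your pigeonhole argument is essentially the paper's easy case. In the remaining case $x+t_nv\in\M_i$, the paper uses the wedged plus proximally smooth hypotheses to write $\overline\M_i$ locally as the epigraph of a lower-$C^2$ function $g$, uses the characterization of $\rb\T_{\overline\M_i}(x)$ to get $v^N=g'(0;v^{N-1})$, and then Danskin's formula with a squeeze argument to show that the \emph{boundary} points $\bigl(x_n^{N-1},g(x_n^{N-1})\bigr)$, which lie in lower-dimensional strata, still approach $x$ in the direction $v$; pigeonholing those boundary points produces $j$. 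Some such transfer of the approximating sequence down to $\overline\M_i\backslash\M_i$ is unavoidable, and your proposal currently has no mechanism for it.
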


\subsection{Background in differential inclusion theory}
References for this section are \cite{FHC83,BOOK98}.

Dynamic optimization relies heavily on the compactness of trajectory theorem (\cite[Theorem 3.1.7]{FHC83},\cite[Theorem~4.1.11]{BOOK98}).  This important theorem essentially asserts (with state space $\M=\R^N$) that a sequence of arcs that are almost trajectories of a differential inclusion with data satisfying (SH) has a subsequence that converges uniformly to an actual trajectory.  

Suppose $\Gamma:\R^N\rightrightarrows\R^N$ satisfies (SH).  The concept of an Euler solution to (DI)$_{\Gamma}$ was introduced in \cite{BOOK98}, and can be described as follows.  Let $\pi$ be a partition of $[0,T]$, say $\pi=\{0=t_0<t_1\dots <t_{\ell}=T\}$.  Usually one takes $t_{n}= n\frac{T}{\ell}$, but this is not necessary.  The norm of the partition is $\bd\|\pi\|:=\max_{n=1,\dots,\ell}\{t_{n}-t_{n-1}\}\ed$.  Let $\gamma(\cdot):\R^N\to\R^N$ be {\it any} function that is a selection of $\Gamma$; i.e. $\gamma(x)\in \Gamma(x)$ for all $x$.  Given the initial condition $x_0=x$ in (DI)$_{\Gamma}$, define a sequence of node points recursively by
\begin{eqnarray*}
x_1&=&x_0 + (t_1-t_0)\gamma(x_0)\\
x_2&=&x_1 + (t_2-t_1)\gamma(x_1)\\
\vdots &\vdots& \qquad \vdots \\
x_\ell&=&x_{\ell-1} + (t_{\ell}-t_{\ell-1})\gamma(x_{\ell-1}).
\end{eqnarray*}
The Euler polygonal arc $x^{\pi}(\cdot)$ is the piecewise linear function defined on $[0,T]$ that linearly interpolates the above sequence.  That is,
\[
x^{\pi}(t)= x_n + (t-t_n)\gamma(x_n)\quad \text{whenever }t\in[t_n,t_{n+1}].
\]
The compactness of trajectories theorem implies that for a given selection $\gamma(\cdot)$, a sequence $\{x^{\pi_m}(\cdot)\}$ of Euler polygonal arcs converges uniformly to a solution $x(\cdot)$ of (DI)$_{\Gamma}$ as $\|\pi_m\|\to 0$ as $m\to\infty$.  Such an $x(\cdot)$ is called an Euler arc.

Suppose $\M$ is an embedded manifold with $\overline\M$ proximally smooth of radius $\delta$.  We define the extension $\Gamma_{\text{ext}}:\bigl\{\overline\M+\delta\B\bigr\}\rightrightarrows\R^N$ of $\overline\Gamma$ by setting
\begin{equation}\label{extension}
\Gamma_{\text{ext}}(x)=\overline\Gamma\bigl(\proj_{\overline\M}(x)\bigr)
\end{equation}
whenever $x\in\overline\M+\delta\B$.  As (essentially) noted in \cite[Remark~4.9]{CSW95}, if $(\M,\Gamma)$ satisfies (SH) (respectively (SH)$_+$), then $(\overline \M+\delta\B,\Gamma_{\ext})$ also satisfies (SH) (respectively, (SH)$_+$)  The Lipschitz constant for $\Gamma_{\ext}$ in (iv) could perhaps be larger, but for notational simplicity we still denote it by $k$.

The Euler arc concept can be naturally extended to multifunctions defined on manifolds in one of (at least) three ways. These include (1) utilizing a change of variables with a fixed atlas of local charts, (2) applying the known concept to the extension $\Gamma_{\ext}$ as defined in (\ref{extension}), or (3) taking a projection back to the manifold at each discrete time step.  These methods will be compared elsewhere, but here we opt for the latter.  Recall $\gamma(\cdot)$ is a selection of $\Gamma$ on $\M$.  The node points are now chosen as
\[
x_{n+1}=\proj_{\overline\M}\biggl[x_{n}+(t_{n+1}-t_{n})\gamma\bigl(x_{n}\bigr)\biggr],
\]
and the process terminates when $x_{n+1}\in{\overline\M}\backslash\M$.  The Euler polygonal arc $x^{\pi}(\cdot)$ is defined as before.   Although $x^{\pi}(\cdot)$ may leave $\M$, since
$d_{\overline\M}\bigl[x^{\pi}_{n}+(t_{n+1}-t_{n})\gamma\bigl(x_{n}\bigr)\bigr]\leq c\,\|\pi\|$ for some constant $c$ independent of $\pi$ or $n$, the limiting Euler arc will lie in $\M$ and be a trajectory of (DI)$_{\Gamma}$.

Suppose $\M$ is an embedded manifold and $(\M,\Gamma)$ satisfies (SH).  Consider the property
\begin{itemize}
\item[(v)]  Every solution $x(\cdot)$ of (DI)$_\Gamma$ is an Euler arc.
\end{itemize}
Specifically, (v) means that if $\dot x(t)\in\Gamma\bigl(x(t)\bigr) \;\ae t\in[0,T]$ with $x(0)=x$, then there there exists a selection $\gamma(\cdot)$ of $\Gamma$ and a sequence of partitions $\{\pi_m\}$ with $\|\pi_m\|\to 0$ so that the Euler polygonal arcs $x^{\pi_m}$ associated with $\gamma(\cdot)$ converge uniformly to $x(\cdot)$.  We call (v) the {\it Euler arc property}, and note that the Lipschitz assumption (iv) implies (v):  This was shown for $\M=\R^N$ in \cite[Theorem~4.3.7]{BOOK98}.  In fact, a selection $\gamma(\cdot)$ can be produced for which the Euler arc is unique (i.e. is independent of the sequence of partitions).  The proof of the extension of this result to manifolds is routine.  We shall use the Euler arc assumption (v) in Theorem~\ref{SI} below when we give a new result on strong invariance.

We need another important result from $\R^N$ to be adapted to manifolds, and this is known as the Filippov approximate trajectory theorem (see \cite[Theorem~3.1.6]{FHC83}).  With $\Gamma$ defined on $\R^N$ and satisfying (SH)$_+$, this theorem says the following:  Suppose $y(\cdot)$ is a Lipschitz arc defined on $[0,T]$.  Define
\begin{equation}\label{rho}
\rho_{[0,T]}\bigl(y(\cdot)\bigr)=\int_0^T \dist\biggl(\dot y(t),\Gamma\bigl(y(t)\bigr)\biggr)\,dt,
\end{equation}
where for purely aesthetic purposes we switch to the notation $\dist(v,\C):=d_{\C}(v)$.  The theorem concludes that there exists a trajectory $z(\cdot)$ to (DI)$_{\Gamma}$ with $z(0)=y(0)$ and for which
\[
\bigl\|y(\cdot)-z(\cdot)\bigr\|\leq e^{kT}\,\rho_{[0,T]}\bigl(y(\cdot)\bigr).
\]
The result is valid even without $\Gamma$ having convex values; however, assuming it does, then an additional property can be asserted.  Namely, if $y(\cdot)$ is $C^1$, then the  solution $z(\cdot)$ can be chosen $C^1$ also.  This is not often explicitly stated, but follows from the construction of $z(\cdot)$ in the proof of \cite[Theorem~3.1.6]{FHC83}.  We adapt this result to manifolds in the next proposition.

\begin{prop}\label{Filippov}  Suppose $\M$ is an embedded manifold with $\overline\M$ proximally smooth of radius $\delta$, and $(\M,\Gamma)$ satisfies (SH)$_+$.  Suppose $y(\cdot):[a,b]\to \overline\M+\delta\B$ is a Lipschitz arc with $y(a)\in\M$, and for $a<t\leq b$, define
\[
\rho_{[a,t]}\bigl(y(\cdot)\bigr)=\int_a^{t} \dist\biggl(\dot y(s),\Gamma\bigl(\proj_{\overline\M}\bigl(y(s)\bigr)\bigr)\biggr)\,ds.
\]
Then there exists a trajectory $z(\cdot)$ of $\Gamma$ with $z(0)=y(0)$ that is defined on the interval $[a,\bar t)$, where $\bar t:= \min\{b,\Esc\bigl(z(\cdot),\M,\Gamma\bigr)\}$, so that
\[
\sup_{s\in [a,t]}\bigl\|y(s)-z(s)\bigr\|\leq e^{k(t-a)}\,\rho_{[a,t]}\bigl(y(\cdot)\bigr).
\]
for all $t\in [a,\bar t)$.  If $y(\cdot)$ is $C^1$, then $z(\cdot)$ can be chosen $C^1$ as well.
\end{prop}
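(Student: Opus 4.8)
The plan is to obtain $z(\cdot)$ by transporting the Euclidean Filippov theorem (\cite[Theorem~3.1.6]{FHC83}) to $\M$ through the extension $\Gamma_{\ext}$ of (\ref{extension}), and then to verify that the trajectory so obtained never leaves $\overline\M$ — whence it is automatically a trajectory of $\Gamma$ up to its escape time from $\M$. As a first step I would extend $\Gamma_{\ext}$, which by the observation following (\ref{extension}) satisfies (SH)$_+$ on $\overline\M+\delta\B$ with constant $k$, to a multifunction $\tilde\Gamma:\R^N\rightrightarrows\R^N$ satisfying (SH)$_+$, still with Lipschitz constant denoted $k$; since $\Gamma_{\ext}$ is Lipschitz on bounded sets and, as elsewhere in this paper, the pertinent trajectories remain in an a priori bounded region, only a bounded-domain extension is really needed, and this is routine. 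By (\ref{extension}), $\tilde\Gamma(x)=\overline\Gamma\bigl(\proj_{\overline\M}(x)\bigr)$ for every $x\in\overline\M+\delta\B$, so the integrand defining $\rho_{[a,t]}\bigl(y(\cdot)\bigr)$ equals $\dist\bigl(\dot y(s),\tilde\Gamma(y(s))\bigr)$, and $\rho_{[a,t]}\bigl(y(\cdot)\bigr)$ coincides with the quantity (\ref{rho}) computed for $\tilde\Gamma$.

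Applying \cite[Theorem~3.1.6]{FHC83} to $\tilde\Gamma$ and the arc $y(\cdot)$ on $[a,b]$ then yields a trajectory $w(\cdot)$ of $\tilde\Gamma$ on $[a,b]$ with $w(a)=y(a)$ for which the standard proof furnishes the pointwise bound $\|y(s)-w(s)\|\le e^{k(s-a)}\rho_{[a,s]}\bigl(y(\cdot)\bigr)$ at every $s\in[a,b]$; moreover, because $\tilde\Gamma$ has convex values, when $y(\cdot)$ is $C^1$ the trajectory $w(\cdot)$ may be chosen $C^1$.

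The crux is to show $w(s)\in\overline\M$ for all $s\in[a,b]$. Put $\phi(s):=d_{\overline\M}\bigl(w(s)\bigr)$, a nonnegative Lipschitz function with $\phi(a)=d_{\overline\M}\bigl(y(a)\bigr)=0$. If $\phi(s_1)>0$ for some $s_1$, set $s_0:=\sup\{s<s_1:\phi(s)=0\}$ and, after replacing $s_1$ by a smaller value in $(s_0,s_1]$, arrange $0<\phi(s)<\delta$ for all $s\in(s_0,s_1)$. On that interval $d_{\overline\M}$ is $C^1$ near $w(s)$ with $\nabla d_{\overline\M}\bigl(w(s)\bigr)$ a unit proximal normal to $\overline\M$ at $p(s):=\proj_{\overline\M}\bigl(w(s)\bigr)$, while $\dot w(s)\in\tilde\Gamma\bigl(w(s)\bigr)=\overline\Gamma\bigl(p(s)\bigr)\subseteq\T_{\overline\M}\bigl(p(s)\bigr)$; the last inclusion holds throughout $\overline\M$ because the tangency $\overline\Gamma(x)\subseteq\T_{\overline\M}(x)$, valid on $\M$ by (SH)(i), extends to all of $\overline\M$ using that $\overline\Gamma$ is Hausdorff continuous and that $x\mapsto\T_{\overline\M}(x)$ has closed graph (proximal smoothness). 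Since $\T_{\overline\M}$ is the (negative) polar of $\N_{\overline\M}$ for proximally smooth sets, the chain rule gives $\phi'(s)=\langle\nabla d_{\overline\M}(w(s)),\dot w(s)\rangle\le 0$ for a.e. $s\in(s_0,s_1)$, hence $\phi(s_1)=\phi(s_0)+\int_{s_0}^{s_1}\phi'(s)\,ds\le 0$, a contradiction. Therefore $\phi\equiv 0$, i.e. $w([a,b])\subseteq\overline\M$, and there $\dot w(s)\in\tilde\Gamma(w(s))=\overline\Gamma(w(s))$.

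To conclude, since $w(a)=y(a)\in\M$ the arc $w$ remains in $\M$ on $[a,\bar t)$, where $\bar t:=\min\{b,\Esc\bigl(w(\cdot),\M,\Gamma\bigr)\}$, and on that interval $\overline\Gamma$ coincides with $\Gamma$; thus $z:=w|_{[a,\bar t)}$ is a trajectory of (DI)$_{\Gamma}$ with $z(a)=y(a)$, and is $C^1$ whenever $w$ is. The required estimate follows from the pointwise bound above and the monotonicity of $s\mapsto e^{k(s-a)}$ and $s\mapsto\rho_{[a,s]}\bigl(y(\cdot)\bigr)$: for $t\in[a,\bar t)$ and $s\in[a,t]$, $\|y(s)-z(s)\|\le e^{k(s-a)}\rho_{[a,s]}\bigl(y(\cdot)\bigr)\le e^{k(t-a)}\rho_{[a,t]}\bigl(y(\cdot)\bigr)$. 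I expect the main obstacle to be the invariance argument of the third paragraph — specifically, legitimizing the chain rule for $\phi=d_{\overline\M}\circ w$ together with the continuity (bootstrap) argument keeping $w$ in the tube $\{d_{\overline\M}<\delta\}$ on which $d_{\overline\M}$ is smooth, and recording that the tangency of $\Gamma$ over $\M$ propagates to $\overline\Gamma$ over all of $\overline\M$; the remaining steps merely transcribe the $\R^N$ theory.
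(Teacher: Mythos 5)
Your overall route is the same as the paper's (extend via $\Gamma_{\ext}$ from (\ref{extension}), apply the Euclidean Filippov theorem, then an invariance argument to keep the trajectory in $\M$), but the step you yourself flag as the crux fails as written. The multifunction $x\rightrightarrows\T_{\overline\M}(x)$ does \emph{not} have closed graph — for proximally smooth sets it is the \emph{normal} cone whose graph is closed — and the inclusion $\overline\Gamma(x)\subseteq\T_{\overline\M}(x)$ genuinely fails at points of $\overline\M\backslash\M$: (SH)(i) constrains $\Gamma$ only on $\M$, and limits of vectors tangent to $\M$ need not be tangent to $\overline\M$ at relative-boundary points. The paper's own Example 1 refutes it: take $\M=\M_1$ the open upper half-plane and $\Gamma=F_1\equiv\{(0,-1)\}$; then $\overline\Gamma(x)=\{(0,-1)\}$ at points $x$ of the $x_1$-axis, whereas $\T_{\overline\M_1}(x)$ is the closed upper half-plane. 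For the same data your intermediate conclusion $w([a,b])\subseteq\overline\M$ is false whenever $b-a$ exceeds the initial height: the Filippov trajectory crosses the axis and leaves $\overline\M_1$. So the third paragraph proves a false statement and cannot stand as the justification.

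The gap is repairable, and the repair is precisely what the truncation $\bar t=\min\{b,\Esc\bigl(z(\cdot),\M,\Gamma\bigr)\}$ in the statement is for — a feature your argument never uses. What is true, and all that is needed, is a \emph{local} invariance assertion at points of $\M$: if $w(\tau)\in\M$, then for $s$ near $\tau$ one has $0\le d_{\overline\M}\bigl(w(s)\bigr)<\delta$ and $p(s):=\proj_{\overline\M}\bigl(w(s)\bigr)$ close to $w(\tau)$, hence $p(s)\in\M$ (using that $\M$ is relatively open in $\overline\M$); there $\T_{\overline\M}\bigl(p(s)\bigr)=\T_{\M}\bigl(p(s)\bigr)$ and (SH)(i) does give $\dot w(s)\in\overline\Gamma\bigl(p(s)\bigr)\subseteq\T_{\overline\M}\bigl(p(s)\bigr)$, so your monotonicity computation for $d_{\overline\M}\bigl(w(\cdot)\bigr)$ shows $w$ cannot exit $\overline\M$ directly from a point of $\M$. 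Consequently $w$ remains in $\M$ until it approaches $\overline\M\backslash\M$, i.e.\ exactly up to the escape time, and on $[a,\bar t)$ it is a trajectory of $\Gamma$ satisfying the Filippov estimate. This localized invariance is what the paper's proof invokes (in the guise of strong invariance of $\overline\M$ under $\Gamma_{\ext}$ via (\ref{SI with Lip})); the rest of your argument — the extension, the application of the Euclidean theorem, the monotonicity of $t\mapsto e^{k(t-a)}\rho_{[a,t]}$, and the $C^1$ refinement from convexity of values — matches the paper and is fine.
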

\begin{proof}
Consider $\Gamma_{\ext}$ defined in (\ref{extension}).  The usual result \cite[Theorem~3.1.6]{FHC83}) now applies and yields the desired estimate.  The fact that $z(\cdot)$ actually remains in $\M$ is due to $z(0)=y(0)\in\M$ and that $(\M,\Gamma_{\text{ext}})$ is strongly invariant (see  (\ref{SI with Lip}) below, or \cite{FHC75,CLSW95,BOOK98}).
\end{proof}

\begin{rem}\label{full Filippov}
{\rm If it is the case that
\[
\inf_{z\in \overline \M\backslash\M}\|y(t)-z\|> e^{k\,(t-a)}\,\rho_{[a,t]}\bigl(y(\cdot)\bigr),
\]
for all $t\in[a,b]$, then $\bar t=b$ in Proposition~\ref{Filippov}.
}
\end{rem}

The following result is not usually emphasized in differential inclusion theory, and so we include it here along with a short proof.

\begin{prop}\label{v=dot x}
Suppose $(\M,\Gamma)$ satisfies (SH)$_+$ and $v\in \Gamma(x)$.  Then there exists a $C^1$ trajectory $x(\cdot)$ of (DI)$_{\Gamma}$ for which $\dot x(0)=v$.
\end{prop}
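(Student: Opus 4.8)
The plan is to realize $v$ as an initial velocity by comparing with the affine arc that moves straight in the direction $v$, applying the Filippov-type estimate of Proposition~\ref{Filippov}, and then upgrading the resulting uniform approximation to an exact statement about the initial derivative via a quadratic bound on the defect integral $\rho$.

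First I would fix $b>0$ small enough that the segment $y(t):=x+tv$, $t\in[0,b]$, remains in the tube $\overline\M+\delta\B$ (possible since $y(0)=x\in\M$), so that $\proj_{\overline\M}\bigl(y(s)\bigr)$ is well defined and single valued there; the arc $y(\cdot)$ is affine, hence $C^1$, with $\dot y\equiv v$ and $y(0)=x$. The key estimate is then
\[
\dist\Bigl(\dot y(s),\Gamma\bigl(\proj_{\overline\M}(y(s))\bigr)\Bigr)
=\dist\Bigl(v,\Gamma\bigl(\proj_{\overline\M}(y(s))\bigr)\Bigr)
\leq k\,\bigl\|x-\proj_{\overline\M}(y(s))\bigr\|,
\]
which uses $v\in\Gamma(x)$ and the Lipschitz hypothesis (iv) from (SH)$_+$. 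Because $\Gamma(x)\subseteq\T_{\M}(x)$ by (SH)(i), the vector $v$ is tangent to the $C^2$ manifold $\M$ at $x$, so $d_{\overline\M}(y(s))=O(s^2)$; together with $\|x-y(s)\|=s\|v\|$ this gives $\|x-\proj_{\overline\M}(y(s))\|\leq s\|v\|+O(s^2)$, and therefore $\rho_{[0,t]}\bigl(y(\cdot)\bigr)=O(t^2)$.

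Next I would invoke Proposition~\ref{Filippov} to obtain a $C^1$ trajectory $z(\cdot)$ of (DI)$_{\Gamma}$, defined on a nontrivial interval $[0,\bar t)$, with $z(0)=y(0)=x$ and $\sup_{s\in[0,t]}\|z(s)-y(s)\|\leq e^{kt}\rho_{[0,t]}\bigl(y(\cdot)\bigr)=O(t^2)$ for all $t\in[0,\bar t)$. Consequently $z(t)=x+tv+O(t^2)$, so $\tfrac1t\bigl(z(t)-z(0)\bigr)\to v$ as $t\downarrow0$; since $z(\cdot)$ is $C^1$, this forces $\dot z(0)=v$, and taking $x(\cdot):=z(\cdot)$ completes the argument.

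The one place care is needed is the last inference: Proposition~\ref{Filippov} controls $y-z$ only in the supremum norm and is silent about derivatives, so the argument succeeds precisely because the defect $\rho_{[0,t]}\bigl(y(\cdot)\bigr)$ decays faster than linearly (here like $t^2$) --- and it is in obtaining that rate that the three ingredients $v\in\Gamma(x)$, the Lipschitz bound (iv), and tangency of $v$ to a $C^2$ manifold have to be used together. The remaining items (that $\proj_{\overline\M}(y(s))$ is meaningful, that $z(\cdot)$ does not leave $\M$ at once, and that $z(\cdot)$ may be taken $C^1$ because $y(\cdot)$ is) are exactly what Proposition~\ref{Filippov} provides under proximal smoothness of $\overline\M$.
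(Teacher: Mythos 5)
Your argument is correct and is essentially the paper's own proof: compare with the affine arc $y(t)=x+tv$, bound the defect $\rho_{[0,t]}\bigl(y(\cdot)\bigr)$ quadratically using $v\in\Gamma(x)$ and the Lipschitz condition (iv), apply Proposition~\ref{Filippov} to get a $C^1$ trajectory within $O(t^2)$ in the sup norm, and read off $\dot x(0)=v$ from the difference quotient. The only cosmetic difference is that the paper first reduces to $\M=\R^N$ while you carry the projection $\proj_{\overline\M}\bigl(y(s)\bigr)$ explicitly; in fact your tangency\slash $O(s^2)$ step is not even needed, since $x\in\overline\M$ already gives $\bigl\|x-\proj_{\overline\M}\bigl(y(s)\bigr)\bigr\|\leq 2s\|v\|$, which suffices for the quadratic bound.
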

\begin{proof}  We can assume $\M=\R^N$.  Define $y(t)=x+tv$ for $t\in[0,T]$, which is a $C^1$ arc that satisfies
\[
\rho_{[0,t]}\bigl(y(\cdot) \bigr)=\int_0^t \dist\bigl(v,\Gamma(x+sv)\bigr)\,ds\leq \frac{k\|v\|}{2}t^2.
\]
By Proposition~\ref{Filippov}, there exists a $C^1$ arc $x(\cdot)$ defined on $[0,T]$ satisfying
\begin{equation}\label{dot x(0)=v}
\|y(t)-x(t)\|\leq\sup_{s\in[0,t]}\bigl\|y(s)-x(s)\bigr\|\leq \frac{1}{2}\,k\,\|v\|\,e^{kt}\,t^2.
\end{equation}
We have
\[
\bigl\|v-\dot x(0)\bigr\|=\lim_{t\searrow 0}\left\|\frac{y(t)-x}{t} - \frac{x(t)-x}{t} \right\|
=\lim_{t\searrow 0}\left\|\frac{y(t)-x(t)}{t} \right\|=0
\]
by (\ref{dot x(0)=v}).  Therefore $\dot x(0)=v$.
\end{proof}

\section{Sufficient condition for strong invariance}
\label{sec:SIsuff}
Suppose $\M\subseteq\R^N$ is an embedded manifold, a multifunction $\Gamma:\M\rightrightarrows\R^N$ satisfies (SH), and $\C\subseteq\R^N$ is closed.  Then $(\M,\Gamma)$ is said to be strongly invariant on $\C$ provided every solution $x(\cdot)$ of (DI)$_{\Gamma}$ with $x\in\M\cap \C$ is such $x(t)\in \C$ for all $t\in\bigl[0,\Esc\bigl(x(\cdot),\M,\Gamma\bigr)\bigr)$.  It is convenient in this section to notate the (maximized) Hamiltonian $H_{\Gamma}:\R^N\times\R^N\to\R$ by
\[
H_{\Gamma}(x,\zeta)=-h_{\Gamma}(x,-\zeta)=\max_{\gamma\in\Gamma(x)}\langle\gamma,\zeta\rangle.
\]
The following theorem is a sufficient condition that generalizes one half of a well-known characterization of strong invariance in the case when $\M=\R^N$ and $\Gamma$ satisfies (SH)$_{+}$.  The characterization with Lipschitz data is the HJ inequality
\begin{equation}\label{SI with Lip}
H_{\Gamma}(x,\zeta)\leq 0\quad\forall x\in\C,\,\zeta\in N^P_{\C}(x),
\end{equation}
and its equivalence was first proved in a tangential form by Clarke \cite{FHC75} (see also \cite{CLSW95,BOOK98}) and later in the normal form (\ref{SI with Lip}) by Krastanov \cite{Kra95}.  Although Clarke's original proof in \cite{FHC75} has the same flavor as ours, it nevertheless relied heavily on the Lipschitz property.

Our result has interest beyond stratified systems, and gives a sufficient condition for strong invariance for any potentially non-Lipschitz system whose only trajectories are Euler arcs.  This result should be compared with the main result in \cite{DRW05}, where the structure of a dissipative-Lipschitz multifunction was exploited to provide the first (to our knowledge) characterization for strong invariance with non-Lipschitz dynamics.  There is a nontrivial intersection between these results, but also a substantial difference in that \cite{DRW05} assumes a structural statement directly on the dynamic data, whereas the stratification system relies on state-dependent assumptions.

For a closed set $\C\subseteq\R^N$, denote by
\[
\C(x)=\proj_{\C}(x):=\{c\in \C:\|x-c\|=d_{\C}(x)\}
\]
the set of closest elements in $\C$ to $x$.  We also will later use the notation $\proj(x,\C)$ for $\proj_{\C}(x)$.

\begin{thm}\label{SI}
Suppose $\M\subseteq\R^N$ is a bounded embedded manifold and $\Gamma:\M\rightrightarrows\R^N$ is a multifunction satisfying (SH) and the Euler arc assumption (v).  Suppose $\C\subseteq \R^N$ is closed, and assume there exists a constant $\kappa>0$ so that
\begin{equation}\label{SI<}
x\in\M,\,c\in \C(x)\quad\Rightarrow\quad
H_{\Gamma}(x,x-c)\leq \kappa\, d_\C^2(x).
\end{equation}
Then for $x\in\M$ and $x(\cdot)$ any trajectory of (DI)$_{\Gamma}$, we have
\begin{equation}\label{SIest}
d_{\C}\bigl(x(t)\bigr)\leq e^{\kappa t}\,d_{\C}\bigl(x\bigr)\quad\forall t\in[0,T).
\end{equation}
In particular, $(\M,\Gamma)$ is strongly invariant on $\C$.
\end{thm}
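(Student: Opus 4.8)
The plan is to reduce the estimate (\ref{SIest}) to a statement about Euler polygonal arcs, to prove it for those arcs in a discrete and \emph{squared} form, and then to pass to the limit using the Euler arc hypothesis (v); the strong invariance assertion is then just the special case $d_\C(x)=0$.

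Given a trajectory $x(\cdot)$ of (DI)$_{\Gamma}$ with $x(0)=x\in\M$, invoke (v) to obtain a selection $\gamma(\cdot)$ of $\Gamma$ and partitions $\pi_m$ with $\|\pi_m\|\to0$ whose Euler polygonal arcs $x^{\pi_m}(\cdot)$ converge uniformly to $x(\cdot)$. Fix such a partition $\pi$ (suppressing the index $m$) with nodes $x_0=x,x_1,\dots,x_\ell$, and write $h_n:=t_{n+1}-t_n$, $v_n:=\gamma(x_n)\in\Gamma(x_n)$, $y_{n+1}:=x_n+h_nv_n$, so that $x_{n+1}=\proj_{\overline\M}(y_{n+1})$. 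The key single-step estimate comes from choosing any $c_n\in\C(x_n)$ and expanding the square:
\[
d_\C(y_{n+1})^2\le\|y_{n+1}-c_n\|^2=d_\C(x_n)^2+2h_n\langle v_n,x_n-c_n\rangle+h_n^2\|v_n\|^2 .
\]
Since $x_n\in\M$ and $c_n\in\C(x_n)$, hypothesis (\ref{SI<}) together with $\langle v_n,x_n-c_n\rangle\le H_{\Gamma}(x_n,x_n-c_n)$ gives $\langle v_n,x_n-c_n\rangle\le\kappa\,d_\C(x_n)^2$; and $\|v_n\|\le R$ for a fixed $R$, since $\M$ is bounded and $\Gamma$ satisfies (SH)(iii). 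Passing from $y_{n+1}$ to $x_{n+1}$ via $d_\C(x_{n+1})\le d_\C(y_{n+1})+d_{\overline\M}(y_{n+1})$ — the last term being of second order in $h_n$, as $v_n$ is tangent to the $C^2$ manifold $\M$ at $x_n$ — one arrives at a recursion of the form
\[
d_\C(x_{n+1})^2\le(1+2\kappa h_n)\,d_\C(x_n)^2+L\,h_n^2
\]
with $L$ independent of $\pi$ and $n$.

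Unwinding this recursion — and this is exactly why one must track $d_\C^2$ rather than $d_\C$ — yields, for every node $t_n\le T$,
\[
d_\C(x_n)^2\le e^{2\kappa t_n}\Bigl(d_\C(x)^2+L\sum_{j<n}h_j^2\Bigr)\le e^{2\kappa t_n}\bigl(d_\C(x)^2+L\,T\,\|\pi\|\bigr),
\]
since $\sum_{j<n}h_j^2\le\|\pi\|\sum_{j<n}h_j\le\|\pi\|T$, so the correction term is $O(\|\pi\|)$. Now let $\pi=\pi_m$ with $\|\pi_m\|\to0$: for a fixed $t\in[0,T)$ take the largest node $t^m_{n(m)}\le t$, note $d_\C\bigl(x^{\pi_m}(0)\bigr)=d_\C(x)$ for all $m$, and use $x^{\pi_m}\bigl(t^m_{n(m)}\bigr)\to x(t)$ (uniform convergence together with $t^m_{n(m)}\to t$ and continuity of $x(\cdot)$) and continuity of $d_\C$ to pass to the limit in the displayed inequality, obtaining $d_\C\bigl(x(t)\bigr)^2\le e^{2\kappa t}d_\C(x)^2$, which is (\ref{SIest}). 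Finally, if $x\in\C$ then $d_\C(x)=0$, so (\ref{SIest}) forces $d_\C\bigl(x(t)\bigr)=0$, i.e. $x(t)\in\C$, for all $t\in[0,T)$; hence $(\M,\Gamma)$ is strongly invariant on $\C$.

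The main obstacle is the error bookkeeping, of which two points require care. First, the quadratic expansion unavoidably produces the term $h_n^2\|v_n\|^2$; tracking the squared distance is what lets $\sum_nh_n^2=O(\|\pi\|)$ absorb it, whereas a naive recursion for $d_\C$ itself would accumulate an $O(1)$ error and would not close. Second, the projection correction $d_{\overline\M}(y_{n+1})$ must be shown to be $O(h_n^2)$ \emph{uniformly} in $n$ and $\pi$ — not merely $O(\|\pi\|)$, which would again sum to a non-vanishing error; this is where one uses that $\M$ is $C^2$, so that locally $\M$ is a second-order-flat graph over its tangent space, and in the settings where Theorem~\ref{SI} is actually applied the proximal smoothness of $\overline\M$ provides such a bound with explicit constant. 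Everything else — boundedness of the velocities on $\M$ and the final limiting argument — is routine once (v) furnishes the approximating polygonal arcs.
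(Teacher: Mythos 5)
Your proposal is correct and follows essentially the same route as the paper: reduce to Euler polygonal arcs via assumption (v), derive the one-step squared-distance recursion $d_\C^2(x_{n+1})\le(1+2\kappa h_n)d_\C^2(x_n)+O(h_n^2)$ from (\ref{SI<}) and the velocity bound, unwind it, and pass to the uniform limit before taking square roots. The only difference is that you explicitly track the projection error $d_{\overline\M}(y_{n+1})$ (arguing it is second order in $h_n$), a step the paper's displayed computation glosses over by treating $x^\pi_{n+1}$ as the unprojected node; this is a point in your write-up's favor rather than a deviation in method.
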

\begin{proof}
Let $x\in\M$, $0<t<T$, $\pi=\{0=t_0<t_1,\dots<t_{\ell}=t\}$ a partition of $[0,t]$, and $x^{\pi}(\cdot)$ a polygonal Euler arc associated with $\pi$. Thus $x^{\pi}(\cdot)$ is piecewise linear on $[0,t]$, and for each $n=0,1, \dots,\ell-1$ satisfies
\[
x^{\pi}_{n+1}=proj_{\overline\M}\biggl[x^{\pi}_n+(t_{n+1}-t_{n})\gamma_n\biggr]
\]
for some $\gamma_n\in\Gamma\bigl(x^{\pi}_n\bigr)$, where $x^{\pi}_n:=x^{\pi}(t_n)$.  Let $\|\Gamma\|:=\sup\{\|\gamma\|:\gamma\in \Gamma(x),\,x\in r\B\cap\M\}$ (with $r$ sufficiently large), and choose any $c_n\in \C(x^{\pi}_n)$. For each $n$, we have
\begin{eqnarray}
d^2_{\C}\bigl(x^{\pi}_{n+1}\bigr) &\leq&
\|x^{\pi}_{n+1}-c_n\|^2  \nonumber\\
&= & \|x^{\pi}_n-c_n\|^2 +2(t_{n+1}-t_n)\bigl\langle \gamma_n,x^{\pi}_n-c_n\bigr\rangle + (t_{n+1}-t_n)^2\|\gamma_n\|^2 \nonumber\\
&\leq& \bigl(1+2\,(t_{n+1}-t_n)\,\kappa\bigr)
d_{\C}^2\bigl(x^{\pi}_n\bigr)+\|\pi\|^2\,\|\Gamma\|^2
\label{SI Main<}\\
&\leq &
\prod_{n'=0}^{n}\bigl(1+2(t_{n'+1}-t_{n'})\kappa\bigr)
d_{\C}^2\bigl(x\bigr) \nonumber \\
&&\qquad\qquad\qquad + \left(\frac{1-\bigl(1+2\|\pi\|\,\kappa\bigr)^{n+1}}{2\,\kappa}  \right)\|\pi\|\,\|\Gamma\|^2, \nonumber
\end{eqnarray}
where (\ref{SI Main<}) holds by invoking the assumption (\ref{SI<}).  Now if $x(\cdot)$ is any solution to (DI)$_{\Gamma}$, then by the Euler arc assumption (v), there exists a sequence of partitions $\pi_m$ and associated Euler arcs $x^m(\cdot):=x^{\pi_m}(\cdot)$ so that $\|\pi_m\|\to 0$ and $\|x^m(\cdot)-x(\cdot)\|\to 0$ as $m\to \infty$.  Each $x^m(\cdot)$ satisfies the previous estimate, and hence (\ref{SIest}) holds by letting $m\to\infty$ and taking square roots.
\end{proof}

\begin{cor}\label{SI cor}  Consider a stratified system as introduced above.  Suppose $\C\subseteq \R^N$ is closed and assume that (\ref{SI<}) holds for $\M=\R^N$ and $\Gamma=G$.  For a fixed $i$, if $x(\cdot):[a,b]\to \overline\M_i$ is a trajectory of $G$ with $x(t)\in\M_i$ for all $t\in(a,b)$, then
\begin{equation*}
d_{\C}\bigl(x(b)\bigr)\leq e^{k(b-a)}d_{\C}\bigl(x(a)\bigr).
\end{equation*}
\end{cor}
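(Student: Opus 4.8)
The plan is to recognize this corollary as the ``single subdomain'' instance of strong invariance: apply Theorem~\ref{SI} to the subsystem $(\M_i,F_i)$ on the open time interval $(a,b)$, and then let the endpoints limit. I first verify that $(\M_i,F_i)$ meets the hypotheses of Theorem~\ref{SI}. By the standing assumptions on the stratified data, $(\M_i,F_i)$ satisfies (SH)$_+$, hence (SH); since condition (iv) implies the Euler arc property (v), it satisfies (v) as well; and $\overline\M_i$ is proximally smooth of radius $\delta$, so the projected Euler scheme of Section~\ref{More Prelim} is available. The requirement in Theorem~\ref{SI} that the manifold be bounded is inessential here: the trajectory $x(\cdot)|_{[a,b]}$ has compact image, and by the linear growth bound (SH)(iii) so do all the approximating Euler polygons, so the argument of Theorem~\ref{SI} goes through after restricting to a fixed ball (cf.\ the localization remarks in Section~\ref{sec:introstrat}).

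Next I verify the Hamiltonian hypothesis (\ref{SI<}) for the pair $(\M_i,F_i)$ and the set $\C$. Since $F_i(x)\subseteq G(x)$ whenever $x\in\M_i$ --- an inclusion already used in the proof of Proposition~\ref{G=F solutions}, and visible from the representation (\ref{G rep}) --- one has $H_{F_i}(x,\zeta)\le H_G(x,\zeta)$ for every $\zeta$. Therefore
\[
x\in\M_i,\ c\in\C(x)\quad\Longrightarrow\quad H_{F_i}(x,x-c)\le H_G(x,x-c)\le\kappa\,d_\C^2(x),
\]
the last inequality being precisely the assumption (\ref{SI<}) imposed on $G$. Thus Theorem~\ref{SI} applies to $(\M_i,F_i)$ relative to $\C$.

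Finally I identify the restrictions of $x(\cdot)$ to subintervals of $(a,b)$ as trajectories of $(\M_i,F_i)$. The arc $x(\cdot)$ is Lipschitz on $[a,b]$, so Proposition~\ref{G=F solutions}(c) (applied after a translation of time) gives $\dot x(t)\in F_i\bigl(x(t)\bigr)$ for a.e.\ $t$ with $x(t)\in\M_i$, in particular for a.e.\ $t\in(a,b)$. Fixing $a<t_1<t_2<b$, the arc $x(\cdot)|_{[t_1,t_2]}$ lies in $\M_i$, starts in $\M_i$, and solves (DI)$_{F_i}$, so the distance estimate (\ref{SIest}) of Theorem~\ref{SI} yields
\[
d_\C\bigl(x(t_2)\bigr)\le e^{\kappa(t_2-t_1)}\,d_\C\bigl(x(t_1)\bigr).
\]
Letting $t_1\downarrow a$ and $t_2\uparrow b$ and invoking the continuity of $x(\cdot)$ and of $d_\C(\cdot)$ then gives $d_\C\bigl(x(b)\bigr)\le e^{\kappa(b-a)}\,d_\C\bigl(x(a)\bigr)$, which is the assertion.

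The only genuine obstacle is the behaviour at the endpoints: $x(a)$ and $x(b)$ may belong to $\overline\M_i\setminus\M_i$, where neither (SC) nor the subsystem dynamics $F_i$ are defined, so Theorem~\ref{SI} cannot be invoked directly on $[a,b]$ --- this is what forces the detour through interior subintervals $[t_1,t_2]$ and the subsequent limit. No Zeno phenomenon arises, since $x(\cdot)$ is assumed to stay in the single manifold $\M_i$ throughout $(a,b)$; this corollary is the clean building block that the later, more delicate arguments will splice together.
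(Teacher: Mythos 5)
Your proposal is correct and follows essentially the same route as the paper's own proof: restrict to interior subintervals where the arc is a trajectory of $(\M_i,F_i)$ (via Proposition~\ref{G=F solutions}), note that (SH)$_+$ gives the Euler arc property so Theorem~\ref{SI} applies, verify (\ref{SI<}) for the subsystem from the inclusion $F_i(x)\subseteq G(x)$, and then pass to the endpoints by continuity. Your verification of the Hamiltonian hypothesis through $H_{F_i}\le H_G$ is if anything slightly more careful than the paper's shorthand \lq\lq$G=F_i$ on $\M_i$\rq\rq, and the residual mismatch between the constant $\kappa$ of (\ref{SI<}) and the $k$ appearing in the corollary's statement is present in the paper as well.
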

\begin{proof}
For all small $\varepsilon>0$, the arc $x(\cdot)$ restricted to $[a+\varepsilon,b-\varepsilon]$ is contained in $\M_i$ and therefore is a trajectory of $F_i(\cdot)$.  Since $\bigl(\M_i,F_i(\cdot)\bigr)$ satisfies (SH)$_+$ it must also satisfy the Euler arc assumption (v).  Since $G=F_i$ when $x\in\M_i$, we have that (\ref{SI<}) holds for $(\M,\Gamma)=(\M_i,\Gamma_i)$.  It follows from Theorem~\ref{SI} that
\[
d_{\C}\bigl(x(a+\varepsilon)\bigr)\leq e^{k(b-a-2\varepsilon)}d_{\C}\bigl(x(b-\varepsilon)\bigr).
\]
Letting $\varepsilon\downarrow 0$ finishes the proof.
\end{proof}

\section{Strong invariance in stratified systems}
\label{sec:stratSI}
By attempting to characterize a value function as a solution to an HJ equation, it is not enough to merely have a sufficient condition for strong invariance as in Theorem~\ref{SI}, but rather one needs a full characterization.  This is closely related to the two objectives stated in the introduction, where the necessity relates to (Obj1) and sufficiency to (Obj2).  Theorem~\ref{SI} contains only a sufficient condition, and so we must seek a necessary one for stratified systems.  This is the point that the stratification assumptions play their greatest role.  Recall we are given a stratified domain $\{\M_1,\dots,\M_M\}$ along with stratified dynamics encapsulated in $G$.

\begin{defn}\label{defn of G sharp}
The {\em essential velocity} multifunction $G^{\sharp}:\R^N\rightrightarrows\R^N$ is defined by
\begin{equation}\label{def of G sharp}
G^{\sharp}(x)=\bigcup_{i} \bigl\{\overline F_i(x)\cap \T_{\overline\M_i}(x):x\in \overline\M_i\bigr\}.
\end{equation}
\end{defn}

The definition of $G^{\sharp}$ should be compared with the representation (\ref{G rep}) of $G$.  One has $G^{\sharp}$ lying \lq\lq between\rq\rq\ $F$ and $G$; that is, for each $x$, we have
\begin{equation}\label{containments}
F_i(x)\subseteq G^{\sharp}(x)\subseteq G(x)\quad \text{whenever } x\in\M_i.
\end{equation}
In general, $G^{\sharp}$ will not possess the desirable properties typically invoked in differential inclusion theory.  For example, although its values are compact, they are not necessarily convex (violating (SH)(i)); nor is its graph necessarily closed (violating (SH)(ii)).  Its designation as the essential velocity set is based on the following, and should be compared with Proposition~\ref{v=dot x} which assumed Lipschitz dynamics.

\begin{prop}\label{prop of G Sharp}
Suppose $x(\cdot)$ is a solution to (DI)$_G$ defined on $[0,T)$.  Then $\dot x(t)\in G^{\sharp}\bigl(x(t)\bigr)$ for almost all $t \in[0,T)$.  Conversely, if $v\in G^{\sharp}(x)$, then there exists $T>0$ and a $C^1$ solution $x(\cdot)$ to (DI)$_G$ with $\dot x(0)=v$.
\end{prop}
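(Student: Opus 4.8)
The plan is to treat the two implications separately: the forward one is essentially a restatement of Proposition~\ref{G=F solutions}, while the converse is a Filippov-approximation argument modelled on Proposition~\ref{v=dot x} but run against the fattened dynamics. For necessity, let $x(\cdot)$ solve (DI)$_G$ on $[0,T)$. By the equivalence (a)$\Leftrightarrow$(c) of Proposition~\ref{G=F solutions}, for each $i$ one has $\dot x(t)\in F_i\bigl(x(t)\bigr)$ for almost every $t$ in $J_i:=\{t:x(t)\in\M_i\}$. Since the $\M_i$ partition $\R^N$ the finitely many sets $J_i$ partition $[0,T)$, so for a.e.\ $t\in[0,T)$, with $i$ the unique index satisfying $x(t)\in\M_i$, we have $\dot x(t)\in F_i\bigl(x(t)\bigr)$. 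As $x(t)\in\M_i\subseteq\overline\M_i$, as $F_i\bigl(x(t)\bigr)\subseteq\overline F_i\bigl(x(t)\bigr)$, and as $F_i\bigl(x(t)\bigr)\subseteq\T_{\M_i}\bigl(x(t)\bigr)=\T_{\overline\M_i}\bigl(x(t)\bigr)$ by (SH)(i) (the tangent space agreeing with the tangent cone at points of $\M_i$), this forces $\dot x(t)\in\overline F_i\bigl(x(t)\bigr)\cap\T_{\overline\M_i}\bigl(x(t)\bigr)\subseteq G^{\sharp}\bigl(x(t)\bigr)$.

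For the converse, given $v\in G^{\sharp}(x)$ I would fix an index $i$ with $x\in\overline\M_i$ and $v\in\overline F_i(x)\cap\T_{\overline\M_i}(x)$. Because $x\in\overline\M_i$, $(\overline F_i)_{\ext}(x)=\overline F_i(x)\ni v$, and since $\bigl(\overline\M_i+\delta\B,(\overline F_i)_{\ext}\bigr)$ satisfies (SH)$_+$, Proposition~\ref{v=dot x} (applied, after the usual localization, to this extended system) produces a $C^1$ trajectory $z(\cdot)$ of $(\overline F_i)_{\ext}$ on some interval $[0,T_0]$ with $z(0)=x$ and $\dot z(0)=v$. Concretely $z(\cdot)$ arises as a Filippov approximation of the ray $y(t)=x+tv$: from $v\in\overline F_i(x)$, the elementary bound $\|\proj_{\overline\M_i}(x+sv)-x\|\le 2s\|v\|$, and the $k$-Lipschitz property of $\overline F_i$, the defect $\rho_{[0,t]}\bigl(y(\cdot)\bigr)$ is $O(t^2)$, so the Filippov estimate gives $\|y(t)-z(t)\|/t\to 0$ as $t\downarrow 0$, i.e.\ $\dot z(0)=v$. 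The point of passing to $(\overline F_i)_{\ext}$ here is that when $x\in\overline\M_i\setminus\M_i$ one cannot quote Proposition~\ref{Filippov} on $\M_i$ directly, its hypothesis requiring the approximated arc to start in $\M_i$.

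The remaining and, I expect, decisive step is to see that $z(\cdot)$ is genuinely a trajectory of (DI)$_G$ and not merely of the fattened system. For this I would argue that the closed set $\overline\M_i$ is strongly invariant under $(\overline F_i)_{\ext}$: one has $(\overline F_i)_{\ext}(c)=\overline F_i(c)\subseteq\T_{\overline\M_i}(c)$ for every $c\in\overline\M_i$ --- this is (SH)(i) for $c\in\M_i$, and for $c\in\overline\M_i\setminus\M_i$ it follows by taking $c_n\in\M_i$ with $c_n\to c$ and invoking the closed-graph property of the tangent cone for proximally smooth sets --- which is exactly the Hamilton--Jacobi inequality (\ref{SI with Lip}) for $\C=\overline\M_i$, so strong invariance follows from the Lipschitz characterization cited there. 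Since $z(0)=x\in\overline\M_i$, this yields $z(t)\in\overline\M_i$ for all $t\in[0,T_0]$, hence $\dot z(t)\in(\overline F_i)_{\ext}\bigl(z(t)\bigr)=\overline F_i\bigl(z(t)\bigr)$, and since $z(t)\in\overline\M_i$ the representation (\ref{G rep}) gives $\overline F_i\bigl(z(t)\bigr)\subseteq G\bigl(z(t)\bigr)$. Thus $z(\cdot)$ is a $C^1$ solution of (DI)$_G$ with $\dot z(0)=v$, and the genuine obstacle is precisely this confinement of the approximating trajectory to $\overline\M_i$ when $x$ sits on its relative boundary --- a property secured by the proximal smoothness of $\overline\M_i$, which keeps $\overline F_i$ inside the tangent cone there.
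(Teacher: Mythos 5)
The necessity half of your argument is fine and is essentially the paper's own: it follows from the containments $F_i(x)\subseteq G^{\sharp}(x)$ together with Proposition~\ref{G=F solutions}. The converse, however, has a genuine gap at its decisive step: the claim that $(\overline F_i)_{\ext}(c)=\overline F_i(c)\subseteq\T_{\overline\M_i}(c)$ for every $c\in\overline\M_i$, hence that $\overline\M_i$ is strongly invariant under the extended dynamics via (\ref{SI with Lip}). Your justification at points $c\in\overline\M_i\setminus\M_i$ invokes a ``closed-graph property of the tangent cone'' for proximally smooth sets, but it is the \emph{normal} cone that has closed graph there; the tangent cone is its polar and can shrink abruptly at boundary points, so limits of vectors tangent at $c_n\in\M_i$ need not be tangent at $c$, and (SH)(i) constrains $F_i$ only on $\M_i$. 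Indeed the containment fails in the paper's own running example: on the half-axis whose dynamics point toward the origin, $\overline F_i(0)$ is the unit vector pointing \emph{out} of $\overline\M_i$, so $\overline F_i(0)\not\subseteq\T_{\overline\M_i}(0)$ and $\overline\M_i$ is not strongly invariant under $(\overline F_i)_{\ext}$ --- this ``exiting a stratum through its relative boundary'' is exactly the behavior the stratified model is designed to permit. Nor does it suffice that your particular $v$ is tangent at the single point $x$: when $v\in\rb\T_{\overline\M_i}(x)$ the Filippov trajectory with $\dot z(0)=v$ can leave $\overline\M_i$ instantly (take $\M_i$ the open upper half-plane, $F_i(x_1,x_2)=(1,-x_1)$, $x=0$, $v=(1,0)$: the extended flow from $0$ is $(t,-t^2/2)$). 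Once $z(\cdot)$ leaves $\overline\M_i$, the inclusion $\overline F_i\bigl(z(t)\bigr)\subseteq G\bigl(z(t)\bigr)$ from (\ref{G rep}) is no longer available and the argument collapses.

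This boundary case is precisely where the paper's extra structure enters and where its proof diverges from yours. When $v\in\ri\T_{\overline\M_i}(x)$, Lemma~\ref{essential derivative} confines the Filippov/Proposition~\ref{v=dot x} trajectory to $\M_i$ not by an invariance theorem on $\overline\M_i$ but by a direct estimate: relative pointedness and (\ref{int of tangent cone}) give $\langle\zeta,v\rangle\le-\mu\|\zeta\|$ for all normals $\zeta$ (a strict margin available only in the relative interior), which is played off against the proximal-smoothness realization constant $\delta$. When $v\in\rb\T_{\overline\M_i}(x)$, the paper instead uses the relatively wedged hypothesis through Lemma~\ref{rb of tangent cone} to produce a lower-dimensional stratum $\M_j\subseteq\overline\M_i$ with $x\in\overline\M_j$ and $v\in\T_{\overline\M_j}(x)$, verifies $v\in\overline F_j(x)$, and descends by induction on dimension until the relative-interior case applies. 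Your construction would have to be replaced by (or reduced to) this two-case scheme; as written, the confinement step is false and the converse direction is not established.
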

\begin{proof}
The first statement of the theorem follows immediately from (\ref{containments}) and Proposition~\ref{G=F solutions}.  As for the second statement, we separate the most important fact into the following lemma.  A somewhat more general version of this lemma appeared in \cite{CW96}; we offer a much simpler proof here by exploiting the stratified structure.

\begin{lem}\label{essential derivative}
Suppose $\M$ is an embedded manifold with $\overline\M$ proximally smooth of radius $\delta$, the multifunction $\Gamma:\M\rightrightarrows \R^N$ satisfies (SH)$_+$, and $x\in\overline\M$ with $\N_{\overline\M}(x)$ relatively pointed.  Then for any $v\in \overline\Gamma(x)\cap\ri\T_{\overline\M}(x)$, there exist $T>0$ and a $C^1$ trajectory $x(\cdot):[0,T]\to\M\cup\{x\}$ so that $x(0)=x$ and $\dot x(0)=v$.
\end{lem}
\begin{proof}
By considering local coordinates, we may assume without loss of generality that the dimension of $\M$ is $N$.  Recall from (\ref{extension}) the extension $\Gamma_{\text{ext}}$ of $\Gamma$, which satisfies (SH)$_+$ on $\overline\M+\delta\B$ and agrees with $\overline\Gamma$ on $\overline\M$.  By Proposition~\ref{v=dot x}, there exists $T>0$ and a $C^1$ trajectory $x(\cdot)$ of $\Gamma_{\text{ext}}$ with $x(0)=x$ and $\dot x(0)=v$.  We will show $x(t)\in\M$ for all small $t$ which would finish the proof of the lemma.

Since $v\in\iint\T_{\overline\M}(x)$, by (\ref{int of tangent cone}) there exists $\mu>0$ so that
\[
\langle \zeta,v\rangle\leq -\mu\,\|\zeta\|\quad \forall \zeta\in\N_{\overline\M}(x).
\]
Reduce $T$ if necessary so that whenever $0<s<T$, we have $\|\dot x(s)-v\|\leq \frac{\mu}{2}$.  Then for all $t\in(0,T]$ and all $\zeta\in\N_{\overline\M}(x)$, we have
\begin{eqnarray}
\left\langle\frac{x(t)-x}{t},\zeta\right\rangle
&=&\frac{1}{t}\int_0^t \langle\dot x(s),\zeta\rangle\,ds
\nonumber \\
&=& \langle v,\zeta\rangle
+\frac{1}{t}\int_0^t \langle\dot x(s)-v,\zeta\rangle\,ds
\label{trajectory inclusion}\\
&\leq& -\mu\|\zeta\|+\frac{\mu}{2}\|\zeta\|= -\frac{\mu}{2}\|\zeta\| \nonumber
\end{eqnarray}
Suppose there exists a sequence $t_n\searrow 0$ with $x(t_n)\notin \M$.  Let $x_n=\proj_{\overline\M}\bigl(x(t_n)\bigr)$ and
\[
\zeta_n =
\begin{cases}
\frac{x(t_n)-x_n}{\|x(t_n)-x_n\|} &\text{if  }x(t_n)\notin\overline{\M}  \\
\bar \zeta_n & \text{if  }x(t_n)\in\overline{\M}
\end{cases}
\]
where $\bar\zeta_n$ is any unit vector in $\N_{\overline{\M}}\bigl(x(t_n)\bigr)$.  Without loss of generality, we can assume $\zeta_n\to\zeta\in\N_{\overline\M}(x)$.  We have
\begin{eqnarray}
\left\langle\frac{x(t_n)-x}{t_n},\zeta\right\rangle
&=& \left\langle\frac{x(t_n)-x}{t_n},\zeta-\zeta_n\right\rangle
+\left\langle\frac{x(t_n)-x_n}{t_n},\zeta_n\right\rangle
+\left\langle\frac{x_n-x}{t_n},\zeta_n\right\rangle\nonumber\\
&\geq& -\|\zeta_n-\zeta\|\,\frac{\|x(t_n)-x\|}{t_n}
+0-\frac{\|x_n-x\|^2}{2\delta\,t_n}.\label{lower bound}
\end{eqnarray}
The justifications for the three lower bounds in (\ref{lower bound}) are, respectively, the Cauchy-Schwarz inequality, the nature of $\zeta_n$, and the facts that $x\in\overline\M$ and $\zeta_n\in\N_{\overline\M}(x_n)$ is realized by $\delta$.
Observe the obvious estimate
\begin{equation}\label{bound on x(t)-x}
\|x(t)-x\|\leq \int_0^t\|\dot x(s)\|\,ds\leq \|\Gamma\|\,t,
\end{equation}
where $\|\Gamma\|$ is an upper bound on the values of $\Gamma$ that can occur here, and
\begin{equation}\label{bound on x_n-x}
\|x_n-x\|\leq\|x(t_n)-x_n\|+\|x(t_n)-x\|\leq 2\,\|x(t_n)-x\|\leq 2\,\|\Gamma\|\,t_n,
\end{equation}
where the second inequality follows since $x_n=\proj_{\overline\M}\bigl(x(t_n)\bigr)$ and $x\in\overline\M$, and the last one from (\ref{bound on x(t)-x}).  Letting $n\to\infty$ in (\ref{lower bound}) and using the estimates obtained in (\ref{bound on x(t)-x}) and (\ref{bound on x_n-x}) leads to a contradiction of (\ref{trajectory inclusion}). The conclusion is $x(t)\in\M$ for all small $t$ as claimed.
\end{proof}

Now we return to the proof of Proposition~\ref{prop of G Sharp}.  Let $v\in G^{\sharp}(x)$, and let $i$ be such that $x\in \overline \M_i$ and $v\in \overline F_i(x)\cap\T_{\overline \M_i}(x)$.  If $v\in \ri\T_{\overline \M_i}(x)$, then the result follows by Lemma~\ref{essential derivative}.  If $v\notin \ri\T_{\overline \M_i}(x)$, then $v\in \rb\T_{\overline \M_i}(x)$ and hence by Lemma~\ref{rb of tangent cone}, there exists another subdomain $\M_j\subseteq \overline\M_i$ with $x\in\overline\M_j$, $v\in \T_{\overline \M_j}(x)$, and $d_j<d_i$.  We also claim that
\begin{equation}\label{claim}
v\in \overline F_j(x).
\end{equation}
Indeed, since $x\in\overline\M_j$, there exists a sequence $x_n\to x$ with $x_n\in\M_j\subseteq\overline\M_i$.  Let $v_n=\proj\bigl(v,\overline F_j(x_n)\bigr)$.  Clearly $v_n\to v$ since $\overline F_j(\cdot)$ is Lipschitz on $\overline\M_j$, and therefore $v\in\overline F_j(x)$ as claimed in (\ref{claim}).  We now have $v\in \overline F_j(x)\cap\T_{\overline \M_j}(x)$ and the argument just given can be repeated with $i$ replaced by $j$.  This can be repeated as necessary but must terminate since the dimension is decreasing at each step.  A stage is reached when $v$ lies in the relative interior of the tangent cone of the subdomain, at which point Lemma~\ref{essential derivative} can once again be invoked to complete the proof.
\end{proof}

\begin{cor}\label{T is lower}  Suppose $\C\subseteq\R^N$ is closed and $(\R^N,G)$ is strongly invariant on $\C$.  Then
\begin{equation}\label{HJ >}
h_{G^\sharp}(x,-\zeta)\geq 0\quad\forall x\in\C,\,\zeta\in\N^P_{\C}(x).
\end{equation}
\end{cor}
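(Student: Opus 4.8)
The plan is to derive the Hamiltonian inequality \eqref{HJ >} directly from the definition of strong invariance together with the converse half of Proposition~\ref{prop of G Sharp}, which guarantees that every $v\in G^\sharp(x)$ is actually realized as $\dot x(0)$ by some $C^1$ trajectory of (DI)$_G$. Fix $x\in\C$ and $\zeta\in\N^P_\C(x)$; since $h_{G^\sharp}(x,-\zeta)=\inf_{v\in G^\sharp(x)}\langle v,-\zeta\rangle$ and $G^\sharp(x)$ is compact, it suffices to show $\langle v,\zeta\rangle\le 0$ for every $v\in G^\sharp(x)$. So pick an arbitrary such $v$.

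First I would invoke Proposition~\ref{prop of G Sharp} to obtain $T>0$ and a $C^1$ trajectory $x(\cdot):[0,T]\to\R^N$ of (DI)$_G$ with $x(0)=x$ and $\dot x(0)=v$. Since $(\R^N,G)$ is strongly invariant on $\C$ and $x(0)=x\in\C$, we have $x(t)\in\C$ for all $t\in[0,T]$, hence $d_\C\bigl(x(t)\bigr)=0$ for all such $t$. Next I would use the proximal normal inequality: because $\zeta\in\N^P_\C(x)$, there is $\sigma>0$ with
\[
\langle \zeta, c'-x\rangle\le \frac{\|\zeta\|}{2\sigma}\,\|c'-x\|^2\qquad\forall\,c'\in\C.
\]
Apply this with $c'=x(t)$ for $t\in(0,T]$ (legitimate since $x(t)\in\C$), divide by $t$, and let $t\searrow 0$. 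Using $\frac{x(t)-x}{t}\to \dot x(0)=v$ on the left, and $\frac{\|x(t)-x\|^2}{t}=t\left\|\frac{x(t)-x}{t}\right\|^2\to 0$ on the right (because $x(\cdot)$ is $C^1$, so $\|x(t)-x\|=O(t)$), we obtain $\langle \zeta,v\rangle\le 0$.

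Since $v\in G^\sharp(x)$ was arbitrary, taking the infimum over $v$ gives $h_{G^\sharp}(x,-\zeta)=\inf_{v\in G^\sharp(x)}\langle v,-\zeta\rangle=-\sup_{v\in G^\sharp(x)}\langle v,\zeta\rangle\ge 0$, which is exactly \eqref{HJ >}. As $x\in\C$ and $\zeta\in\N^P_\C(x)$ were arbitrary, the proof is complete. The only point requiring a little care — and the closest thing to an obstacle — is making sure the trajectory supplied by Proposition~\ref{prop of G Sharp} really is defined on a nontrivial interval and is $C^1$ up to $t=0$ so that the difference quotient converges to $v$; but that is precisely the content of the converse statement in Proposition~\ref{prop of G Sharp}, so no extra work is needed. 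Everything else is a routine limiting argument off the proximal normal inequality.
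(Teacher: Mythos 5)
Your proposal is correct and is essentially the paper's own argument: fix $v\in G^\sharp(x)$, use the converse part of Proposition~\ref{prop of G Sharp} to get a $C^1$ trajectory with $\dot x(0)=v$, apply strong invariance so $x(t)\in\C$, and pass to the limit in the proximal normal inequality (the paper bounds $\|x(t)-x\|$ by $\|\Gamma\|\,t$ via (\ref{bound on x(t)-x}) rather than citing the $C^1$ property, a negligible difference). No gaps; the compactness of $G^\sharp(x)$ you mention is not even needed for the final infimum step.
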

\begin{proof}
Let $x\in\C$ and $\zeta\in\N^P_{\C}(x)$ be such that $\|\zeta\|=1$ and $\zeta$ be realized by $\sigma>0$.  Fix $v\in G^{\sharp}(x)$.  By Proposition~\ref{prop of G Sharp}, there exists a $C^1$ solution to (DI)$_{G}$ defined on an interval $[0,T]$.  By the strong invariance property, we have $x(t)\in\C$ for all $t\in[0,T]$, and therefore
\[
\langle v,\zeta\rangle = \lim_{t\searrow 0}\;
\left\langle\frac{x(t)-x}{t},\zeta\right\rangle\leq
 \lim_{t\searrow 0} \;\frac{1}{2\,\sigma\,t}\bigl\|x(t)-x\bigr\|^2=0,
\]
where the inequality follows from $\zeta\in\N^P_{\C}(x)$ and $x(t)\in\C$, and the last equality from (\ref{bound on x(t)-x}).  Taking the sup over $v\in G^\sharp(x)$ yields (\ref{HJ >}).
\end{proof}

We now turn to the sufficiency of (\ref{HJ >}) for strong invariance.  The proof of this result is adapted from the clever and difficult argument used in the proof of \cite[Theorem 3]{BH07}.

\begin{thm}\label{SI for stratified}  Suppose $\C$ is closed and (\ref{HJ >}) holds.  Then
\begin{equation}\label{SI estimate}
d_{\C}\bigl(x(T)\bigr)\leq e^{k\,T}d_{\C}\bigl(x(0)\bigr).
\end{equation}
for any $x(\cdot)$ that is a solution of (DI)$_G$.  In particular, $(\R^N,G)$ is strongly invariant on $\C$.
\end{thm}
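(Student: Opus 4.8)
The plan is to reduce the global distance estimate (\ref{SI estimate}) to a pointwise differential inequality along the trajectory and then integrate via Gronwall, so that the only real content is a state‑space estimate on each piece of Lipschitz data.

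First I would fix a solution $x(\cdot)$ of (DI)$_G$ on $[0,T]$, which lies in a fixed bounded set. By Proposition~\ref{G=F solutions}, for a.e.\ $t$ one has $\dot x(t)\in F_i\bigl(x(t)\bigr)$, where $i=i(t)$ is the index with $x(t)\in\M_i$. Put $\phi(t):=d^2_\C\bigl(x(t)\bigr)$; since $d_\C$ is $1$-Lipschitz and $x(\cdot)$ is Lipschitz, $\phi$ is absolutely continuous. At a.e.\ $t$ where $\dot x(t)$ exists, choosing $c\in\C\bigl(x(t)\bigr)$ and comparing $\phi$ with $s\mapsto\|x(s)-c\|^2$ (which dominates $\phi$ and agrees with it at $t$) yields $\phi'(t)\le 2\bigl\langle\dot x(t),x(t)-c\bigr\rangle$. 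Thus the theorem follows from Gronwall's inequality once I establish the key state‑space estimate
\[
x\in\M_i,\ c\in\C(x)\quad\Longrightarrow\quad H_{F_i}\bigl(x,x-c\bigr)\le k\,d^2_\C(x):
\]
it gives $\phi'(t)\le 2k\,\phi(t)$ a.e., hence $\phi(T)\le e^{2kT}\phi(0)$, which is (\ref{SI estimate}), and taking $x(0)\in\C$ gives strong invariance. (The same estimate also feeds Corollary~\ref{SI cor} on each maximal subinterval on which the trajectory stays in one stratum, if one prefers a piecewise argument.)

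For the key estimate, write $\zeta=x-c\in\N^P_\C(c)$ (realized by $\|\zeta\|=d_\C(x)$). The straightforward case is $c\in\overline\M_i$: passing to the limit in the tangent inclusion of (SH)(i) and using the closed graph of the normal/tangent cones of the proximally smooth set $\overline\M_i$ gives $\overline F_i(c)\subseteq\T_{\overline\M_i}(c)$, so $\overline F_i(c)=\overline F_i(c)\cap\T_{\overline\M_i}(c)\subseteq G^\sharp(c)$; then (\ref{HJ >}) yields $H_{\overline F_i}(c,\zeta)\le H_{G^\sharp}(c,\zeta)=-h_{G^\sharp}(c,-\zeta)\le 0$, and the Lipschitz property (iv) of $\overline F_i$ on $\overline\M_i$ transfers this to $x$: $H_{F_i}(x,\zeta)\le H_{\overline F_i}(c,\zeta)+k\|\zeta\|^2\le k\,d^2_\C(x)$.

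The main obstacle is the complementary case $c\notin\overline\M_i$, where the nearest point of $\C$ sits in a stratum whose closure misses $x$ and no Lipschitz transfer from $c$ to $x$ is available. Here I would invoke the stratified apparatus as in the proof of Proposition~\ref{prop of G Sharp}: decompose $\dot x(t)$ via $G(x)=\co\{\overline F_l(x):x\in\overline\M_l\}$ together with (SC), so each component lies in $\overline F_l(x)\cap\T_{\overline\M_l}(x)\subseteq G^\sharp(x)$, and for each component either the relevant nearest point lies in $\overline\M_l$ (reduce as above) or Lemma~\ref{rb of tangent cone} lets one descend to a strictly lower‑dimensional subdomain containing $x$ on whose closure the component is tangent — a descent that must terminate, being trivial at a zero‑dimensional stratum where the velocity is forced to $0$. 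Where this descent still fails to place the nearest point in the correct closure, one falls back on approximating $x(\cdot)$ — using Proposition~\ref{Filippov}, the compactness‑of‑trajectories theorem, and Euler arcs for the Lipschitz systems $(\M_i,F_i)$ to build manageable approximants for which the estimate with constant $k$ holds, then passing to the limit — and on controlling the Zeno‑type accumulation of stratum transitions. Carrying this out while keeping the exponential constant exactly equal to $k$ is the delicate heart of the argument, adapted from \cite[Theorem~3]{BH07}.
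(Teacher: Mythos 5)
Your overall framing (reduce \eqref{SI estimate} to the pointwise bound $H_{F_i}(x,x-c)\le k\,d_\C^2(x)$ for $x\in\M_i$, $c\in\C(x)$, then apply Gronwall to $\phi(t)=d^2_\C(x(t))$) would indeed suffice if that bound were established, but the proposal does not establish it, and its one concrete step is wrong. In the ``straightforward case'' you claim that (SH)(i) plus closedness of the normal-cone graph gives $\overline F_i(c)\subseteq\T_{\overline\M_i}(c)$, hence $\overline F_i(c)\subseteq G^{\sharp}(c)$, so that \eqref{HJ >} applies at $c$. This inclusion is false: (SH)(i) places $F_i$ in the tangent \emph{space} of $\M_i$ at interior points (which for a top-dimensional stratum is all of $\R^N$), and closedness of $\gr\N_{\overline\M_i}$ only gives outer semicontinuity of normals, not that limits of tangent vectors are tangent at boundary points of $\overline\M_i$. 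The paper's own Example~\ref{counterexample} is a counterexample: there $\overline F_1(0)=\{(0,-1)\}$ while $\T_{\overline\M_1}(0)$ is the closed upper half-plane, so $\overline F_1(0)\cap\T_{\overline\M_1}(0)=\emptyset$ and $\overline F_1(0)\not\subseteq G^{\sharp}(0)=\{0\}$. Thus even when $c\in\overline\M_i$ you cannot pass from \eqref{HJ >} (which constrains only the essential velocities $G^{\sharp}(c)$, typically much smaller than $\overline F_i(c)$ at stratum boundaries) to a bound on $H_{\overline F_i}(c,\zeta)$ by the route you describe; the whole difficulty of the theorem is precisely that the Hamiltonian inequality is only available for $G^{\sharp}$ on $\C$ and no Lipschitz transfer to $F_i$ off $\C$ is available.

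For the case $c\notin\overline\M_i$ (and, after the correction above, in general) you offer only a sketch --- decompose via (SC), descend with Lemma~\ref{rb of tangent cone}, ``fall back on approximating $x(\cdot)$'' with Proposition~\ref{Filippov}, Euler arcs, and control of Zeno accumulation --- and you acknowledge that carrying this out is ``the delicate heart of the argument.'' That deferred content is essentially the entire proof: the paper never proves a pointwise estimate of the form \eqref{SI<} from \eqref{HJ >}; instead it argues at the level of trajectories, obtaining the estimate first for arcs confined to a single stratum (Corollary~\ref{SI cor}, via the Euler-arc Theorem~\ref{SI}), then handling arcs that enter and leave a stratum infinitely often by the Filippov-based impulsive approximation in Lemma~\ref{two subdomains}, and finally assembling these by an induction on the dimension of the stratum containing $x(0)$ together with a supremum/continuation argument. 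So the proposal, as written, replaces the actual work with an unproven key estimate whose announced justification fails in its easy case and is absent in its hard case; it is a genuine gap, not an alternative proof.
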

\begin{proof}  Let $\C\subseteq\R^N$ be closed and $x(\cdot)$ a solution of (DI)$_G$.  Recall Corollary~\ref{SI cor}, where the estimate (\ref{SI estimate}) was shown to hold provided $x(\cdot)$ restricted to the open interval $(0,T)$ was a trajectory that resided in only one subdomain.  The following lemma contains the key fact and its proof is perhaps the hardest analysis in the entire paper.

\begin{lem}\label{two subdomains}
Suppose $\M_i$ is a subdomain and $\M$ is a union of subdomains with $\M_i\subseteq\overline \M$.  Assume $\M$ has the property that every trajectory $x(\cdot)$ of (DI)$_{G}$ defined on $[a,b]$ with $x(t)\in\M$ for all $t\in[a,b]$ satisfies
\begin{equation}\label{M hyp}
d_{\C}\bigl(x(b)\bigr)\leq e^{k(b-a)}d_{\C}\bigl(x(a)\bigr).
\end{equation}
Then for any trajectory $x(\cdot)$ of (DI)$_{G}$ that lies totally within $\M_i\cup\M$, we have that (\ref{SI estimate}) holds.
\end{lem}
\begin{rem}\label{M hyp ext}
{\rm As in the proof of Corollary~\ref{SI cor}, the inequality (\ref{M hyp}) remains valid if $x(a)$ and\slash or $x(b)$ are not in $\M$ provided $x(t)\in\M$ for $t\in(a,b)$.
}\end{rem}
\begin{rem}\label{explanation of Zeno}
{\rm The assumption (\ref{M hyp}) seems to almost equal the conclusion (\ref{SI estimate}).  Indeed, if $x(\cdot)$ is a trajectory so that $[a,b]$ can be partitioned in a manner $a=t_0< t_1 <\dots < t_{\ell} =b$ in which $x(\cdot)$ restricted to $(t_n,t_{n+1})$ belongs to either $\M$ or $\M_i$, then the conclusion follows immediately by applying either (\ref{M hyp}) or Corollary~\ref{SI cor} on each subinterval and concatenating the estimates.  In general, however, not all trajectories are like this, and one can even move in and out of $\M_i$ infinitely often (the so-called Zeno effect), or can reside in $\M_i$ for nontrivial time periods that are nowhere dense (see Example~\ref{Zeno}).  When this happens, there is no obvious procedure in how to paste the individual departures from $\M_i$ into one common estimate.  The following proof nonetheless finds a way around that by building \lq\lq impulsive arcs\rq\rq\ that uniformly approximate the original one and are well-behaved on the intervals of a partition.
}
\end{rem}

\begin{proof}  For notational convenience, assume $x(0)\in\M_i$ and $x(T)\in\M_i$ (by (\ref{M hyp}), there is no loss in generality in doing so).   Let $J:=\{t\in[0,T]:x(t)\in\M\}$, which is an open set (since $\overline\M_i\subseteq\M$) and so can be written
\[
J=\bigcup_{n=1}^{\infty} (a_n,b_n)
\]
where the intervals are pairwise disjoint.  The proof is much easier if the sum is finite (see Remark~\ref{explanation of Zeno}).  For fixed $\ell$, write
\[
J_{\ell} = \bigcup_{n=1}^{\ell} (a_n,b_n),
\]
which after reindexing can be assumed to satisfy
\[
b_0:=0\leq a_1<b_1\leq a_2 < b_2\leq \dots \leq a_{\ell} < b_{\ell}\leq T =: a_{\ell+1}.
\]
We note that for each $n=1,\cdots,\ell+1$, both $x(a_{n})$ and $x(b_{n-1})$ belong to $\M_i$.  Let
\[
r:=\inf_{\substack{t\in [0,T]\\ y\in\overline\M_i\backslash \M_i}}\|x(t)-y\|>0
\]
(this is strictly positive because the range of $x(\cdot)$ is compact and is disjoint from $\overline\M_i\backslash\M_i$), and choose $\ell$ sufficiently large so that
\begin{equation}\label{keep y close}
\meas(J\backslash J_{\ell})<\frac{r}{2\,e^{k\,T}\,\|G\|},
\end{equation}
where $\meas(I)$ denotes the Lebesgue measure of $I$ and $\|G\|$ is an upper bound of all the relevant velocities that can appear.

Note that $x(a_n)$ and $x(b_{n-1})$ belong to $\M_i$ for all $n=1,\dots,\ell+1$, and $x(t)\in \M$ for all $t\in (a_n,b_n)$. By assumption (\ref{M hyp}) and Remark~\ref{M hyp ext}, we have
\begin{equation}\label{first est}
d_{\C}\bigl(x(b_n)\bigr)\leq e^{k\,(b_n-a_n)}d_{\C}\bigl(x(a_{n})\bigr)\quad\forall\,n=1,\dots,\ell.
\end{equation}
We will build arcs using Proposition~\ref{Filippov} on the remaining intervals $[b_{n},a_{n+1}]$ that reside entirely within $\M_i$.  For $n = 0,\dots, \ell$, set $\varepsilon_n:=
\meas\bigl((b_{n},a_{n+1})\cap J\bigr)$.  Let $y_{n}(\cdot)$ be the arc $x(\cdot)$ that is restricted to the interval $[b_{n},a_{n+1}]$.  We calculate the closeness of $y_n(\cdot)$ to a trajectory of $(\M_i,F_i)$ by
\[
\rho_n:=\rho_{[b_{n},a_{n+1}]}\bigl(y_n(\cdot)\bigr)=
\int_{b_{n}}^{a_{n+1}} \dist\biggl(\dot x(s),F_i\bigl(y_{n}(s)\bigr)\biggr)\,ds \leq 2\,\|G\|\,\varepsilon_{n},
\]
where the final inequality is justified via Proposition~\ref{G=F solutions} (recall $y_n(t)= x(t)$ on $[b_n,a_{n+1}]$ and Proposition~\ref{G=F solutions} says $\dot x(t)\in F_i\bigl(x(t)\bigr)$ almost everywhere on the set where $x(t)\in \M_i$.)
By Proposition~\ref{Filippov}, there exists a trajectory $z_{n}(\cdot)$ of (DI)$_{F_i}$ defined on $[b_n,a_{n+1}]$ with $z_{n}(b_{n})=y_{n}(b_n)=x(b_n)$ and satisfying
\begin{equation}\label{second est}
\|y_{n}(a_{n+1})-z_{n}(a_{n+1})\|\leq  e^{k(a_{n+1}-b_{n})}\,\rho_n\leq 2\,\|G\|\,e^{k(a_{n+1}-b_{n})}\,\varepsilon_{n}.
\end{equation}
In the notation of Proposition~\ref{Filippov}, we have $\bar t=a_{n+1}$ because
\[
\rho_{[b_{n},t]}\bigl(y_n(\cdot)\bigr)\leq 2\,\|G\|\,\varepsilon \leq2\,\|G\|\,\meas(J\backslash J_{\ell})< r\,e^{-k\,T}
\]
by (\ref{keep y close}); see Remark~\ref{full Filippov}.  Since $z_{n}(\cdot)$ is a trajectory of $F_i$ remaining in $\M_i$, it satisfies the conditions of Corollary~\ref{SI cor} and so
\begin{equation}\label{third est}
d_{\C}\bigl(z_{n}(a_{n+1})\bigr)\leq e^{k\,(a_{n +1}-b_{n})} \,d_{\C}\bigl(z_{n}(b_{n})\bigr).
\end{equation}
The last ingredient needed is the trivial fact
\begin{equation}\label{fourth est}
d_{\C}\bigl(x(a_{n})\bigr)=d_{\C}\bigl(y_{n-1}(a_{n})\bigr)\leq d_{\C}\bigl(z_{n-1}(a_{n})\bigr) + \bigl\|y_{n-1}(a_{n})-z_{n-1}(a_{n})\bigr\|.
\end{equation}
We now calculate
\begin{eqnarray}
d_{\C}\bigl(x(T)\bigr)&=& d_{\C}\bigl(y_{\ell}(a_{\ell+1})\bigr)\nonumber
\\
&\leq& d_{\C}\bigl(z_{\ell}(a_{\ell+1})\bigr)
+\|y_{\ell}(a_{\ell +1})-z_{\ell}(a_{\ell+1})\|
\qquad\text{(by (\ref{fourth est}))}\nonumber\\
&\leq& e^{k\,(T-b_{\ell})}\biggl[ \,d_{\C}\bigl(z_{\ell}(b_{\ell})\bigr)+2\,\|G\|\,\varepsilon_{\ell}\biggr] \qquad\text{(by (\ref{third est}) and (\ref{second est})}  \nonumber\\
&=& e^{k\,(T-b_{\ell})}\biggl[ \,d_{\C}\bigl(x(b_{\ell})\bigr)+2\,\|G\|\,\varepsilon_{\ell}\biggr] \nonumber\\
&\leq& e^{k\,(T-a_{\ell})}\biggl[ \,d_{\C}\bigl(x(a_{\ell})\bigr)+2\,\|G\|\,\varepsilon_{\ell}\biggr] \label{first conclusion}
\end{eqnarray}
where (\ref{first conclusion}) follows from (\ref{first est}).  Repeating the previous argument on the interval $[a_{\ell-1},a_{\ell}]$ gives the similar inequality
\begin{equation*}
d_{\C}\bigl(x(a_{\ell})\bigr) \leq e^{k\,(a_{\ell}-a_{\ell-1})}\biggl[ \,d_{\C}\bigl(x(a_{\ell-1})\bigr)+2\,\|G\|\,\varepsilon_{\ell-1}
\biggr],
\end{equation*}
which after inserting into (\ref{first conclusion}) produces
\begin{equation*}
d_{\C}\bigl(x(T)\bigr) \leq e^{k\,(T-a_{\ell-1})}\biggl[ \,d_{\C}\bigl(x(a_{\ell-1})\bigr)+2\,\|G\|\,\bigl(\varepsilon_{\ell-1}+\varepsilon_{\ell}\bigr)\biggr].
\end{equation*}
After $\ell$ steps, the result is
\begin{equation}\label{second conclusion}
d_{\C}\bigl(x(T)\bigr) \leq e^{k\,T}\biggl[ \,d_{\C}\bigl(x\bigr)+2\,\|G\|\,\sum_{n=1}^{\ell}
\varepsilon_n \biggr].
\end{equation}
Finally, since $\sum_{n=1}^{\ell}
\varepsilon_n = \meas\bigl(J\backslash J_{\ell}\bigr)$, the claim (\ref{SI estimate}) follows from (\ref{second conclusion}) since $\meas\bigl(J\backslash J_{\ell}\bigr)\to 0$ as $\ell\to \infty$.
\end{proof}

We shall use Lemma~\ref{two subdomains} to finish the proof of Theorem~\ref{SI for stratified}.
We are given a trajectory $x(\cdot)$ of (DI)$_G$ defined on $[0,T]$, and must show (\ref{SI estimate}) holds.
Let $i$ be the unique index with $x(0)=x\in\M_i$.  The proof is now based on an induction argument with regard to the dimension $d_i$ of $\M_i$.
Assume first $d_i=N$.  Let $\widetilde T=\min\{T,\Esc\bigl(x(\cdot),\M_i,G)\}>0$, and recall this means $x(t)\in\M_i$ for $0\leq t<\widetilde T$ and either $\widetilde T=T$ or $x(\widetilde T)\in\bdry \M_i$.
The estimate (\ref{SI estimate}) holds with $T$ replaced by $\widetilde T$ by Corollary~\ref{SI cor}.  As an induction hypothesis, assume that for a dimension $d\leq N$, if $d_i\geq d$, then there exists $\widetilde T>0$ so that (\ref{SI estimate}) holds with $T$ replaced by any $\tilde t\in[0,\widetilde T]$.  We have just shown this property holds for $d=N$.  In order to show it holds for $d-1$, assume $d_i=d-1$ and let $\M$ consist of the union of all the subdomains $\M_j$ with $\M_i\cap\overline\M_j\not=\emptyset$.  The stratification assumptions imply $\M_i\subseteq\overline\M_j$ for each $j$ in this union, and thus $d_j\geq d_i+1\geq d$ for all such $j$.

We claim that $\M$ satisfies assumption (\ref{M hyp}) in Lemma~\ref{two subdomains}.  To see this, suppose $y(\cdot):[a,b]\to\M$ is a trajectory of (DI)$_G$.  Let
\[
\tilde b:=\sup\biggl\{t\in[a,b]:d_{\C}\bigl(y(t)\bigr)\leq e^{k(t-a)}d_{\C}\bigl(y(a)\bigr)\biggr\}
\]
Since $y(a)$ lies in a subdomain of dimension greater than or equal to $d$, the induction hypothesis says that $a<\tilde b$.  If $\tilde b<b$, then $y(\tilde b)\in\M$ and the induction hypothesis implies there exists $\widetilde T>0$ so that every $0\leq\tilde t\leq\widetilde T$ satisfies
\[
d_{\C}\bigl(y(\tilde b+\tilde t)\bigr)
\leq e^{k\,(\tilde t)}d_{\C}\bigl(y(\tilde b)\bigr)
\leq e^{k\,(\tilde b+\tilde t-a)}d_{\C}\bigl(y(a)\bigr).
\]
This contradicts that $\tilde b$ was a supremum, and thus $\M$ satisfies (\ref{M hyp}) as claimed.

Recall $x(0)\in\M_i$ and $d_i=d-1$.  Now $x(t)$  belongs to $\M_i\cup\M$ for all small $t>0$, say for $0\leq t\leq\widetilde T$.   This is because $x(\cdot)$ is continuous and $x(0)$ is a positive distance away from $\overline \M_i\backslash\M_i$ and every other subdomain that has dimension less than or equal to $d-1$.  Hence Lemma~\ref{two subdomains} can be applied on the interval $[0,\widetilde T]$, and the induction step is complete.

To finish the proof, we proceed in the same manner that we used above to verify that $\M$ satisfied (\ref{M hyp}).  Indeed, let
\[
\widetilde T:=\sup\biggl\{t\in[0,T]:d_{\C}\bigl(x(t)\bigr)\leq e^{k(t-a)}d_{\C}\bigl(x(0)\bigr)\biggr\}.
\]
One can show $\widetilde T$ equals $T$ in precisely the same way that $\tilde b=b$ was shown above.  The proof is now complete.
\end{proof}

\section{Weak invariance in stratified systems}\label{WI stratified}

Recall that a system $(\M,\Gamma)$ satisfying (SH) is weakly invariant {\it on} a closed set $\C$ {\it in} an open set $\U\subseteq\R^N$ provided for all $x\in \C\cap\M\cap\U$, there exists a trajectory $x(\cdot)$ of (DI)$_{\Gamma}$ on $\bigl[0,T\bigr)$ (with $T=\Esc(x(\cdot),\M\cap\U,\Gamma)$) so that $x(t)\in\C\cap\U$ for all $t\in[0,T)$.  It is known (cf. \cite{BOOK98}) that for $\M=\R^N$, this property is characterized by
\begin{equation}\label{G WI}
h_{\Gamma}(x,\zeta)\leq 0\quad\forall x\in\C\cap\U,\,\zeta\in\N_{\C}^P(x).
\end{equation}
By introducing $G^\sharp$ as a certain submultifunction of $G$, we were able to characterize strong invariance through a Hamilton-Jacobi inequality using $h_{G^\sharp}$ as the Hamiltonian.  Although (Obj1)$_{\ell}$ and (Obj2)$_{\ell}$ are thereby achieved (see Lemmas~\ref{lem1} and \ref{lem2} below), the first objective (Obj1)$_{u}$ can be maintained only if it is shown that the manner of reducing $G$ was not so stringent that it lost the ability to characterize weak invariance.  The following theorem provides the verification that it was not.

\begin{thm}\label{WI thm}  Suppose a closed set $\C\subseteq\R^N$, an open set $\U\subseteq\R^N$, and a stratified system are given.  Then $(\R^N,G)$ is weakly invariant on $\C$ in $\U$ if and only if
\begin{equation}\label{WI}
h_{G^\sharp}(x,\zeta)\leq 0\quad\forall x\in\C\cap\U,\,\zeta\in\N_{\C}^P(x).
\end{equation}
\end{thm}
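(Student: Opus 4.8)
The plan is to prove the two implications separately; the forward implication will be essentially immediate from the classical theory, while the reverse implication carries the real content.

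\emph{Sufficiency of \eqref{WI}.} Suppose \eqref{WI} holds. Since $G^{\sharp}(x)\subseteq G(x)$ by \eqref{containments}, taking infima over the smaller set only increases the value, so $h_{G^{\sharp}}(x,\zeta)\ge h_{G}(x,\zeta)$ for all $x,\zeta$; hence \eqref{WI} forces $h_{G}(x,\zeta)\le 0$ for every $x\in\C\cap\U$ and $\zeta\in\N^{P}_{\C}(x)$. As $G$ satisfies (SH), this is precisely condition \eqref{G WI}, whose equivalence with weak invariance of $(\R^{N},G)$ on $\C$ in $\U$ is classical (cf.\ \cite{BOOK98}). Thus $(\R^{N},G)$ is weakly invariant on $\C$ in $\U$, and no stratified machinery is needed here.

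\emph{Necessity of \eqref{WI}.} Assume $(\R^{N},G)$ is weakly invariant on $\C$ in $\U$, and fix $x\in\C\cap\U$ together with $\zeta\in\N^{P}_{\C}(x)$, which we take to be a unit vector realized by some $\sigma>0$. It suffices to exhibit a single $v\in G^{\sharp}(x)$ with $\langle v,\zeta\rangle\le 0$, since then $h_{G^{\sharp}}(x,\zeta)\le\langle v,\zeta\rangle\le 0$. Let $x(\cdot)$ be a trajectory of (DI)$_{G}$ with $x(0)=x$ that remains in $\C\cap\U$ on some $[0,T)$ with $T>0$. By Proposition~\ref{prop of G Sharp} and Proposition~\ref{G=F solutions}(c), for a.e.\ $t$ one has $\dot x(t)\in F_{j}\bigl(x(t)\bigr)\subseteq\T_{\overline\M_{j}}\bigl(x(t)\bigr)$, where $j=j(t)$ is the index with $x(t)\in\M_{j}$. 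Since each $\overline\M_{j}$ not containing $x$ lies a positive distance from $x$, for $t$ small $x(t)$ meets only the finitely many subdomains $\M_{j}$ with $x\in\overline\M_{j}$. Using that $\zeta$ is realized by $\sigma$, $\tfrac{1}{t}\int_{0}^{t}\langle\zeta,\dot x(s)\rangle\,ds=\tfrac{1}{t}\langle\zeta,x(t)-x\rangle\le\tfrac{1}{2\sigma t}\|x(t)-x\|^{2}\le\tfrac{\|G\|^{2}}{2\sigma}\,t$, so the essential infimum of $s\mapsto\langle\zeta,\dot x(s)\rangle$ over $[0,1/n]$ tends to $0$. Choosing for each $n$ a point $s_{n}\in[0,1/n]$ at which $\dot x(s_{n})$ exists and lies in $F_{j(s_n)}(x(s_n))$, with $\langle\zeta,\dot x(s_{n})\rangle\le\tfrac{\|G\|^{2}}{\sigma n}$, and then invoking the pigeonhole principle on the (finitely many) possible values of $j(s_n)$, we obtain a single index $j^{\ast}$ with $x\in\overline\M_{j^{\ast}}$ and a subsequence along which $x(s_{n})\in\M_{j^{\ast}}$, $v_{n}:=\dot x(s_{n})\in F_{j^{\ast}}\bigl(x(s_{n})\bigr)$, and (the $v_n$ being bounded) $v_{n}\to v$; passing to the limit in $\langle\zeta,v_{n}\rangle\le\tfrac{\|G\|^{2}}{\sigma n}$ gives $\langle v,\zeta\rangle\le 0$. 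The Lipschitz--Hausdorff continuity of $\overline F_{j^{\ast}}$ on $\overline\M_{j^{\ast}}$ (property (iv)) together with $x(s_n)\to x\in\overline\M_{j^\ast}$ gives $v\in\overline F_{j^{\ast}}(x)$.

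The remaining point, which I expect to be the main obstacle, is to show $v\in\T_{\overline\M_{j^{\ast}}}(x)$. This cannot be read off from any naive closedness of the tangent-cone graph --- we are passing from the tangent \emph{spaces} at interior points $x(s_{n})\in\M_{j^{\ast}}$ to the tangent \emph{cone} at the boundary point $x$ --- and must instead be extracted from proximal smoothness of $\overline\M_{j^{\ast}}$. Fix a unit $\zeta'\in\N_{\overline\M_{j^{\ast}}}(x)$ realized by $\delta$; since $\M_{j^{\ast}}$ is $C^{2}$ and $v_{n}$ is tangent to it at $x(s_{n})$, the point $x(s_{n})+\tau v_{n}$ lies within $C\tau^{2}$ of $\overline\M_{j^{\ast}}$ for a constant $C$ uniform near $x$, so applying the proximal inequality $\langle\zeta',y-x\rangle\le\tfrac{1}{2\delta}\|y-x\|^{2}$ to $y=\proj_{\overline\M_{j^{\ast}}}(x(s_{n})+\tau v_{n})$ and dividing by $\tau$ yields $\langle\zeta',v_{n}\rangle\le\tfrac{(\eta_{n}+\tau\|G\|)^{2}}{2\delta\tau}+C\tau+\tfrac{\eta_{n}}{\tau}$, with $\eta_{n}:=\|x(s_{n})-x\|\to 0$. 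Taking $\tau=\sqrt{\eta_{n}}$ makes the right-hand side vanish in the limit, so $\langle\zeta',v\rangle\le 0$; as $\zeta'$ ranges over $\N_{\overline\M_{j^{\ast}}}(x)$ and $\overline\M_{j^{\ast}}$ is proximally smooth, this means $v\in\T_{\overline\M_{j^{\ast}}}(x)$. Hence $v\in\overline F_{j^{\ast}}(x)\cap\T_{\overline\M_{j^{\ast}}}(x)\subseteq G^{\sharp}(x)$ with $\langle v,\zeta\rangle\le 0$, giving \eqref{WI} and completing the argument. (If one instead wishes to mirror the impulsive-arc technique of Lemma~\ref{two subdomains}, an alternative for necessity is to build approximating trajectories that stay in a single subdomain on each piece of a partition and pass to the limit; the velocity-extraction argument above seems the more direct route.)
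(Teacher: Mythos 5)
Your sufficiency direction is fine and is the same one-line argument the paper uses ($G^\sharp\subseteq G$ gives $h_G\le h_{G^\sharp}$, then invoke the classical criterion). The necessity direction, however, has a genuine gap, and it sits exactly at the step you flagged: showing $v\in\T_{\overline\M_{j^*}}(x)$. The estimate you base it on --- that $x(s_n)+\tau v_n$ lies within $C\tau^2$ of $\overline\M_{j^*}$ uniformly in $n$, for $\tau$ as large as $\sqrt{\eta_n}$ --- is false. Tangency of $v_n$ to the manifold at the \emph{interior} point $x(s_n)$ controls only the transversal displacement; it says nothing about directions that run along $\M_{j^*}$ toward and past its relative boundary, where the distance to $\overline\M_{j^*}$ grows linearly in $\tau$, not quadratically. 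The paper's Example~\ref{EX3} (see also Remark~\ref{remark on proof}) realizes exactly this failure with a trajectory of the system: a Zeno-type arc whose derivative equals $(-1,0)\in F_3\bigl(x(s)\bigr)$ at times when it sits on the positive $x_1$-axis $\M_3$ arbitrarily close to the origin. If your selection of times $s_n$ lands on such instants (which it can, e.g.\ when $\zeta=(1,0)$, since then $\langle\zeta,\dot x(s)\rangle=-1$ precisely at the on-axis times), you get $j^*=3$, $v=(-1,0)$, $\eta_n=\epsilon_n$, and $x(s_n)+\sqrt{\epsilon_n}\,v_n$ is at distance about $\sqrt{\epsilon_n}$ from $\overline\M_3$, not $O(\epsilon_n)$; correspondingly $v=(-1,0)\notin\T_{\overline\M_3}(0)$ even though $v\in\overline F_3(0)$ and $\langle v,\zeta\rangle\le 0$. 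So the vector your argument produces need not belong to $G^\sharp(x)$ at all, and no repair of the $C\tau^2$ estimate can fix this, because the intermediate claim itself is false. (Closedness of the tangent-cone ``graph'' fails for the same reason: tangent cones can shrink in the limit as $x(s_n)\to x$ hits the relative boundary.)

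The paper avoids this trap by never working with pointwise derivatives. It takes $t_n\searrow 0$ with $\frac{x(t_n)-x}{t_n}\to v$; this difference-quotient limit automatically lies in $\T_{\overline\M}(x)$ for $\M$ the union of the subdomains the arc occupies near $t=0$, because $x(t_n)\in\M$. It then splits $[0,t_n]$ by occupancy times $J_i^n$, shows the time-averaged velocities converge to vectors $v_i\in\overline F_i(x)$ (these individual $v_i$ may fail to be tangent --- that is precisely the content of Remark~\ref{remark on proof}), and only the convex combination $v=\sum\lambda_i v_i$ is placed in $\co G^\sharp(x)$, using the structural condition (SC) through the inclusion $\overline F_i(x)\cap\T_{\overline\M}(x)=\overline F_i(x)\cap\T_{\overline\M_i}(x)$; finally $h_{G^\sharp}(x,\zeta)=\inf_{v'\in\co G^\sharp(x)}\langle v',\zeta\rangle$ closes the argument. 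If you want to salvage your route, you would have to replace the single-index, pointwise-derivative extraction by some averaging over the visited subdomains --- at which point you are essentially reconstructing the paper's proof.
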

\begin{proof}  Since $G^\sharp$ is contained in $G$, we have $h_G\leq h_{G^\sharp}$.  Thus if (\ref{WI}) is satisfied, then so is (\ref{G WI}) with $\Gamma=G$.  We can then conclude $(\R^N,G)$ is weakly invariant on $\C$ in $\U$.

Conversely, suppose $(\R^N,G)$ is weakly invariant on $\C$ in $\U$, and let $x\in\C\cap\U$ and $\zeta\in\N_{\C}^P(x)$ be given.  By weak invariance, there exists a solution $x(\cdot)$ to (DI)$_G$ that lies in $\C\cap\U$ for all $t\in[0,T)$.  Let $t_n\searrow 0$ so that $x_n:=x(t_n)$ satisfies $\frac{x_n-x}{t_n}\to v$ for some $v$.  We obviously have
\begin{equation}\label{v zeta <0}
\langle v,\zeta\rangle = \lim_{n\to\infty}\left\langle\frac{x_n-x}{t_n},\zeta\right\rangle \leq \frac{1}{2\sigma\,t_n}\|x_n-x\|^2\to 0
\end{equation}
as $n\to \infty$.  The convergence to $0$ is a consequence of the same inequality that was recorded in (\ref{bound on x(t)-x}).  We claim that
\begin{equation}\label{co G sharp}
v\in\co G^\sharp(x),
\end{equation}
where $\co G^\sharp(x)$ denotes the set operation of taking the convex hull of $G^\sharp(x)$.  Along with (\ref{v zeta <0}), this will imply (\ref{WI}) since one has
\begin{equation*}
h_{G^\sharp}(x,\zeta)=\inf_{v'\in G^\sharp(x)}\langle v',\zeta\rangle = \inf_{v'\in \co G^\sharp(x)}\langle v',\zeta\rangle.
\end{equation*}

We seek to verify (\ref{co G sharp}).
 Let $J_i^n:=\{t\in[0,t_n]:x(t)\in\M_i\}$, and write $\bar i(x)$ for those indices for which $\mu^n_i:=\meas(J^n_i)>0$ for all $n$.  Then $i\in\bar i(x)$ implies $x\in\overline\M_i$.
Without loss of generality, we may assume for each $i\in\bar i(x)$ there exists $0\leq\lambda_i\leq 1$ and $v_i\in\R^N$ so that
\begin{equation}\label{convergences}
\frac{\mu^n_i}{t_n}\to \lambda_i,\;\sum_{i\in\bar i(x)}\lambda_i=1,
\quad\text{and}\quad
\frac{1}{\mu_i^n}\int_{J_i^n}\dot x(t)\,dt\to v_i\in \overline F_i(x)
\end{equation}
as $n\to\infty$.
The last inclusion holds because
\begin{eqnarray*}
v_i&=&\lim_{n\to\infty}\frac{1}{\mu_i^n}\int_{J_i^n}\dot x(t)\,dt \\
&\in &
\lim_{n\to\infty}\left[\frac{1}{\mu_i^n}\int_{J_i^n}\proj\bigl(\dot x(t),\overline F_i(x)\big)\,dt +
\frac{1}{\mu_i^n}\int_{J_i^n}k\|x(t)-x\|\B\,dt\right] \\
&\subseteq &
\lim_{n\to\infty}\left[\overline F_i(x) +
k\|G\|\,\left[\frac{1}{\mu_i^n}\int_{J_i^n} t\,dt\right]\B \right] = \overline F_i(x).
\end{eqnarray*}
We now have
\begin{eqnarray*}
v&=&\lim_{n\to\infty}\frac{x_n-x}{t_n} = \lim_{n\to\infty}\frac{1}{t_n}\int_{0}^{t_n}\dot x(t)\,dt  \\
&=& \sum_{i\in\bar i(x)}\lim_{n\to\infty}
\frac{\mu^n_i}{t_n}\left[\frac{1}{\mu_i^n}\int_{J^n_i}\dot x(t)\,dt \right]\\
&=& \sum_{i\in\bar i(x)}\lambda_i v_i \in
\sum_{i\in\bar i(x)}\lambda_i \overline F_i(x)
\end{eqnarray*}
by (\ref{convergences}).  Hence $v\in G(x)$ by (\ref{G rep}).  Now let $\M:=\cup_{i\in\bar i(x)}\M_i$, and since $x_n\in\M$ for all large $n$, we must also have $v\in\T_{\overline\M}(x)$.  The proof is completed by verifying
\begin{equation}\label{last inclusion}
\left\{\co\bigcup_{i\in\bar i(x)}
\overline F_i(x)\right\}
\bigcap \T_{\overline\M}(x)
\;\subseteq\; G(x)\bigcap \T_{\overline\M}(x)\;\subseteq\; \co G^{\sharp}(x).
\end{equation}
Indeed, since $F_i(y)\subseteq\T_{\M_i}(y)$ whenever $y\in\M_i$, one must have
\[
\overline F_i(x)\bigcap \T_{\overline\M}(x)=\overline F_i(x)\bigcap \T_{\overline\M_i}(x)
\]
whenever $x\in\overline\M_i$.  Taking the union over $i\in\bar i(x)$ and then the convex hull yields (\ref{last inclusion}), and finishes the proof.
\end{proof}

\begin{rem}\label{remark on proof}
{\rm It may reasonably be surmised that the vectors $v_i$ in the above proof belong to $\overline F_i(x)\cap \T_{\overline\M_i}(x)$.  However this is not the case in general, further demonstrating how complicated trajectories to stratified systems can be.  See Example~\ref{EX3}.
}
\end{rem}
\begin{rem}
{\rm The proof of Theorem~\ref{WI thm} actually shows that
\begin{equation}\label{infinitesimal generator}
\underset{t\searrow 0}{\Klim}\frac{R^{(t)}(x)-x}{t}\subseteq \co G^{\sharp}(x),
\end{equation}
where $\Klim$ denotes the Kuratowski limit of sets and $R^{(t)}(x)$ is the reachable set at time $t$ from $x$ (The definition of $\Klim$ is provided in the appendix).  Such a result for Lipschitz dynamics is relatively simple and can be used to characterize the reachable set semigroup in terms of its \lq\lq infinitesimal generator\rq\rq.  See \cite{W90}.  The observation (\ref{infinitesimal generator}) gives further justification for calling $G^{\sharp}$ the essential velocity multifunction.
}
\end{rem}

\section{Examples}\label{sec:Examples}  We offer a pair of examples here to illustrate (1) why the proof of Lemma~\ref{two subdomains} has to be so complicated, and (2) a curious phenomenon that arose in the proof of Theorem~\ref{WI thm} when the structural condition needed to be invoked on the velocity sets.

\begin{exam}\label{Zeno}
{\rm We provide a simple technique with $N=2$ for which a trajectory can be constructed to exhibit very complicated behavior while crossing a manifold.  Let $\M_1$ and $\M_2$ be the upper and lower half planes respectively and $\M_3$ the $x_1$-axis.  Let the dynamics be given by
\begin{eqnarray*}
F_1\left(\begin{matrix}x_1\\x_2\end{matrix}\right)&=&F_2\left(\begin{matrix}x_1\\x_2\end{matrix}\right)=\left\{\left(\begin{matrix}1\\ u\end{matrix}\right): -1\leq u \leq 1\right\};\\
F_3\left(\begin{matrix}x_1\\0\end{matrix}\right)&=&\left\{\left(\begin{matrix}u\\ 0\end{matrix}\right):-1\leq u\leq 1\right\}
\end{eqnarray*}
For an interval $[a,b]$, define $\varphi_{a,b}:[a,b]\to\R^2$ by
\[
\varphi_{a,b}(t)= \left\{
\begin{array}{cl}
1 &\quad\text{if }a\leq t\leq\frac{3a+b}{4} \\
-1&\quad\text{if }\frac{3a+b}{4}\leq t\leq \frac{a+3b}{4} \\
1 &\quad\text{if }\frac{a+3b}{4}\leq t\leq b
\end{array}\right.
\]
and a trajectory $x(\cdot)$ by
\[
x(t)= \int_a^t \left(\begin{matrix}1\\ \varphi_{a,b}(s)\end{matrix}\right)\, ds.
\]

\begin{figure}[h]
\centering
\begin{tikzpicture}
\coordinate (P1) at (-2,0);
\coordinate (P2) at (-1,1);
\coordinate (P3) at (1,-1);
\coordinate (P4) at (2,0);

\draw[thick] (P1) -- (P2) -- (P3) -- (P4);
\draw[dashed, thin] (-3.5,-1.5) -- (-3.5,1.5) -- (2.5,1.5) -- (2.5,-1.5) -- cycle;
\draw[<->,thin] (-3.4,0) -- (2.4,0);

\fill (P1) circle (2pt);
\fill (P4) circle (2pt);
\fill (0,0) circle (2pt);

\path (-2,0) node [below] {\footnotesize $t=a$};
\path (2,0) node [below] {\footnotesize $t=b$};
\path (0,0) node [below] {\footnotesize $t=\frac{a+b}{2}$};

\path (1,1) node {\footnotesize $\M_1$};
\path (-1,-1) node {\footnotesize $\M_2$};
\path (-3,0) node {\footnotesize $\M_3$};

\end{tikzpicture}\\
\caption{The trajectory $x(\cdot)$}
\label{Zeno1}
\end{figure}
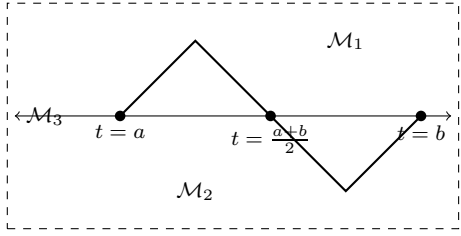
Then $x(\cdot)$ is a trajectory on $[a,b]$ that begins and ends on $\M_3$ with half of its time spent in $\M_1$ and the other half in $\M_2$.  See Figure~\ref{Zeno1}.

Now suppose $\bigl\{(a_n,b_n)\bigr\}$ is {\em any} collection of pairwise disjoint finite intervals in $(0,+\infty)$, and let
\[
v(t)=\left\{
\begin{array}{cl}
\left(\begin{matrix}1\\ \varphi_{a_n,b_n}(t)\end{matrix}\right) &\quad\text{ if }t\in (a_n,b_n) \\
\left(\begin{matrix}1\\ 0\end{matrix}\right)
&\quad \text{ if }t\notin {\bd\bigcup_{n=1}^{\infty}(a_n,b_n)\ed}.
\end{array}\right.
\]
The trajectory $x(t)=\int_0^t v(s)\,ds$ crosses the $x$-axis at the midpoint of every interval $(a_n,b_n)$ and lies on the $x$-axis off of $\cup_{n=1}^{\infty}(a_n,b_n)$.  This trajectory will exhibit Zeno behavior if $b_n\to 0$, and will be very complicated indeed if, for example, the set $A:=(0,+\infty)\backslash \cup_{n=1}^{\infty}(a_n,b_n)$ is nowhere dense with $\meas\bigl((t_0,t_1)\cap A\bigr)>0$ for all $0\leq t_0<t_1$.
}
\end{exam}

\begin{exam}\label{EX3}
{\rm
We modify the previous example by splitting all the manifolds by inserting the $x_2$-axis.  We need not label or introduce additional dynamics because we are only interested here on behavior when $x_1>0$, and refer to $\M_1,\,\M_2,\,\M_3$ as being the same as in Example~\ref{Zeno} intersected with the right half space.  For an interval $(a,b)$, now define
\[
\varphi_{a,b}(t)= \left\{
\begin{array}{cl}
1 &\quad\text{if }a\leq t\leq\frac{4a+b}{5} \\
-1&\quad\text{if }\frac{4a+b}{5}\leq t\leq \frac{2a+3b}{5} \\
1 &\quad\text{if }\frac{2a+3b}{5}\leq t\leq\frac{a+4b}{5}
\end{array}\right.
\]
Let $\{c_n\}$ be any sequence strictly decreasing to $0$ and set $a_n=c_n$ and $b_n=c_{n+1}$.  Let
\[
v(t)=\left\{
\begin{array}{cl}
\left(\begin{matrix}1\\ \varphi_{a_n,b_n}(t)\end{matrix}\right) &\quad\text{ if }t\in \left(a_n,\frac{a_n+4b_n}{5}\right) \\
\left(\begin{matrix}-1\\ 0\end{matrix}\right)
&\quad \text{ if }t\notin {\bd\bigcup_{n=1}^{\infty}\left(a_n,\frac{a_n+4b_n}{5}\right)\ed}.
\end{array}\right.
\]
The arc defined by $x(t)=\int_{0}^{t} v(s)\,ds$
is another Zeno-type arc, and has derivative equal to $-1$ while on the $x_1$-axis.  This occurs one-fifth of the time on every interval $(0,c_n)$, and shows that the vector $v_i$ produced in (\ref{convergences}) may not lie in $\T_{\overline\M}(0)$, where in this case $\M=\M_1\cup\M_2\cup\M_3$.
}\end{exam}

\section{Conclusion}\label{sec:Conclusion}

Our main result was a characterization of flow invariance for dynamics defined on a stratified system. With the underlying dynamics now discontinuous, the classical theory involving Hamiltonian inequalities required a modification.  We imposed two conditions (proximal smoothness and wedgeness) on the subdomains that were not considered in the original formulation in \cite{BH07} of a stratified system.  We believe these conditions are natural and may become standard in future studies in optimal control where a switch to a lower dimensional manifold demands greater attention to detail.
In the present context, these conditions were helpful in identifying what we called $G^\sharp$, the essential velocity multifunction.
Before fully investigating the essential velocity multifunction, we first gave a new general theorem for the sufficiency of strong invariance for a system with perhaps non-Lipschitz data but satisfying a potentially weaker assumption that we called the Euler arc property.  It is not clear which multifunctions beyond the Lipschitz ones satisfy this property.  Another difficult feature of the analysis was to show $h_{G^{\sharp}}$ was also capable of characterizing weak invariance.

The general approach of utilizing the stratified structure of this paper is being applied to analyze other classical problems.  In particular, a new approach to state constraint problems will appear in \cite{WZ11} and to reflected problems in \cite{SWZ11}.  The basic idea is to identify the \lq\lq essential velocity\rq\rq\ multifunction $G^{\sharp}$ in these problems whose associated Hamiltonian achieves the following:
\begin{itemize}
\item[(Obj1)$_{u}$]  $G^{\sharp}$ must be large enough so that an Hamilton-Jacobi inequality of type $h_{G^{\sharp}}\leq 0$ will characterize weak invariance, and
\item[(Obj2)$_{\ell}$]  $G^{\sharp}$ cannot be too large so that an Hamilton-Jacobi inequality of type $h_{G^{\sharp}}\geq 0$ will characterize strong invariance.
\end{itemize}
Finding the right balance between these competing objectives leads respectively to existence (with (Obj1)) and uniqueness (with (Obj2)) results to Hamilton-Jacobi equations.

\appendix
\section{}

This appendix delves into the details of the \lq\lq relative wedge\rq\rq\ assumption, culminating in the proof of Lemma~\ref{rb of tangent cone}.  A wedge assumption is that every normal cone at a point of the boundary is pointed, so our remarks will focus on \lq\lq pointedness\rq\rq\ rather than wedgeness per se.  This is the case for the relative concept as well.

\subsection{Pointedness}
A closed cone $K\subseteq\R^N$ is said to be pointed if $K\cap(-K)=\{0\}$.  A result by Rockafellar \cite{Rock79} says that a closed set $\C\subseteq\R^N$ has a pointed (Clarke) normal cone at $c\in\C$ if and only if $\C$ is epi-Lipschitz around $c$.  The latter means that after a unitary change of coordinates, $\C$ locally has the form of an epigraph of a Lipschitz function.  Technically, this means there exists a unitary map $\Phi:\R^N\to\R^{N-1}\times\R$, a constant $\varepsilon>0$, and a Lipschitz function $g:\varepsilon\,\B_{N-1}\to\R$ with $g(0)=0$  so that
\begin{equation}\label{epi g}
\Phi(\C-c)\bigcap \biggl\{\varepsilon\,\B_{N-1}\times(-\varepsilon,\varepsilon)\biggr\}=\bigl\{\epi g\bigr\} \bigcap \biggl\{\varepsilon\,\B_{N-1}\times(-\varepsilon,\varepsilon)\biggr\}.
\end{equation}
($\B_{N-1}$ denotes the unit ball in $\R^{N-1}$).  The condition $\N_{\C}(c)$ being pointed is also equivalent to $\T_{\C}(c)$ having nonempty interior (see \cite{Rock79}).

In addition to $\N_{\C}(c)$ being pointed, suppose that $\C$ is proximally smooth.  Then the Lipschitz function $g$ in (\ref{epi g}) is {\it lower $C^2$} (\cite{RW98,Rock82} and \cite[Theorem~5.2]{CSW95}).  This means there exists a representation
\begin{equation}\label{lower C^2}
g(y)=\max_{a\in A}\bigl\{g_a(y):a\in A\bigr\},
\end{equation}
where $A$ is a compact metric space and $g_a:\varepsilon\,\B_{N-1}\to \R$ is such that $g_a(\cdot)$, $\nabla\,g_a(\cdot)$, and $\nabla^2\, g_a(\cdot)$ are all continuous as functions over $(y,a)\in \varepsilon\,\B_{N-1}\times A$.  For all $y\in\varepsilon\,\B_{N-1}$, it can be shown that (see \cite{Rock82,RW98}) $g(\cdot)$ as in (\ref{lower C^2}) has a directional derivative $g'(y;w)$ in all directions $w\in\R^{N-1}$, and moreover, satisfies Danskin's formula
\begin{equation}\label{Danskin}
g'(y;w)=\max_{a\in A(y)}\langle \nabla_y\,g_a(y),w\rangle,
\end{equation}
where $A(y):=\{a\in A:g(y)=g_a(y)\}$ (see also \cite{FHC75}).

One advantage of having the representation (\ref{lower C^2}) is that the boundary of $\C$ near $x$ can be easily recognized as the points $\{\Phi^{-1}\bigl(y,g(y)\bigr):\|y\|<\varepsilon\}$.  This fact will be used below in the proof Lemma~\ref{rb of tangent cone}.

The representation (\ref{lower C^2}) provides a relatively simple means to calculate the normal and tangent cones as well as determine their interiors and boundaries.  Recall that $\Phi$ was unitary, which implies that normal and tangent cones are preserved under the transformation.  Denote by $\cone K$ the convex cone generated by $K$.  One can verify that
\begin{eqnarray}
\N_{\C}(c)&=&\cone \Phi^{-1}\biggl[\bigl\{(\nabla\, g_a(0),-1):a\in A(0)\bigr\}\biggr]  \nonumber\\
&=& \bigcap_{\nu>0} \co \bigcup_{\|c'-c\|\leq \nu} \N_{\C}(c')\label{normal formula} \\
\T_{\C}(c)&=& \cone \Phi^{-1}\biggl[\bigl\{(w,1):\langle\nabla\, g_a(0),w\rangle\leq 1\quad\forall\,a\in A(0)\bigr\}\biggr] \nonumber\\
&=& \underset{\nu\to 0}{\Klim}\bigcap_{\|c'-c\|\leq \nu} \T_{\C}(c').\label{tangent formula}
\end{eqnarray}
The proofs of the second representations (\ref{normal formula}) and (\ref{tangent formula}) rely heavily on the regularity assumptions of $(y,a)\mapsto g_a(y),\,\nabla g_a(y)$.  Here, $\K=\underset{\nu\to 0}{\Klim}\,\K_\nu$ refers to the Kuratowski limit of sets, which means two things:
\[
\liminf_{\nu\to 0}\dist(x,\K_{\nu})=0 \quad\forall\, x\in\K,
\]
and, secondly, if $\nu_n\to 0$ and $x_{\nu_n}\in\K_{\nu_n}$ are sequences with $x_{\nu_n}\to x$, then $x\in\K$.

One can derive formulas from (\ref{tangent formula}) for the interior and boundary of the tangent cone.  We leave these routine derivations to the reader.
\begin{eqnarray}
\iint\T_{\C}(c)&=& \Phi^{-1}\biggl[\cone\bigl\{(w,r):\exists\mu>0 \text{ so that }\nonumber\\
&&\qquad \langle\nabla\, g_a(0),w\rangle\leq \|w\|\bigl[r-\mu(\|\nabla\, g_a(0)\|+1)\bigr]\;\forall\,a\in A(0)\bigr\}\biggr] \nonumber   \\
   &=&  \biggl\{v\in\R^N: \exists \mu>0 \text{ so that }\langle \zeta,v\rangle \leq -\mu\|\zeta\|\;\forall \zeta\in\N_{\C}(c)\biggr\}; \label{int of tangent cone}\\
\bdry\T_{\C}(c)&=& \Phi^{-1}\cone\bigl\{(w,-1):\exists a\in A(0)\text{ so that }g'(0;w)=\langle \nabla\,g_a(0),w\rangle\bigr\}  \nonumber \\
&=&\biggl\{v\in\T_{\C}(c): \exists \zeta\in\N_{\C}(x)\backslash\{0\} \text{ so that }\langle \zeta,v\rangle =0\biggr\}  \label{bdry of tangent}.
\end{eqnarray}

\subsection{Relative concepts}
In this subsection, we extend the pointed concept of a normal cone to lower dimensional manifolds.  We mention that we are not introducing this concept for arbitrary proximally smooth sets, but rather only to those that are closures of embedded manifolds.

Suppose $\M$ is a $d$-dimensional manifold embedded in $\R^N$, and its closure $\overline\M$ is proximally smooth.  If $d<N$ and $x\in\overline\M$, then by considering a local chart near $x$, one can show there must exist a {\it subspace} of dimension at least $N-d$ belonging to $\N_{\overline\M}(x)$.
We say that $\N_{\overline\M}(x)$ is {\it relatively pointed} provided it contains no subspace of dimension larger than $N-d$.  This assumption allows for a description of the normal and tangent cones associated with lower dimensional manifolds in a manner similar to what was described above for dimension $N$.  After a change in coordinates, the normal cone is of the form $\N^d\times \R^{N-d}$ and the tangent cone of form $\T^d\times\{0_{N-d}\}$ where $\N^d$, $\T^d$ are convex cones lying in the subspace of $\R^N$ consisting of the first $d$-coordinates, and $0_{N-d}$ is the zero vector with $N-d$ coordinates.  We have that $\N_{\overline\M}(x)$ is {relatively pointed} provided $\N^d$ is pointed.  This is equivalent to saying that the cone $\T^d$ has nonempty interior, or again equivalently, that the relative interior (in the sense of convex analysis - see \cite{RTRconvex70}) of the cone $\T_{\overline\M}(x)$ is of dimension $d$.  The relative interior of a convex set $C$ is denoted by $\ri\C$, and its relative boundary $\rb\C$ is defined as $\overline\C\backslash\ri\C$.

We return to the situation where a (proximally smooth) stratification $\{\M_1,\dots,\\ \M_M\}$ is given, and recall the dimension of $\M_i$ is $d_i$.  Fix $i$, and suppose $x\in\overline{\M}_i$.  If $x\in\M_i$ then $\ri \T_{\overline\M_i}(x)=\T_{\M_i}(x)\simeq \R^{d_i}$ and $\rb \T_{\overline\M_i}(x)=\emptyset$.  An important property of $v$ when $x\in\overline\M_i\backslash\M_i$ and $v\in\rb \T_{\overline\M_i}(x)$ was stated in Lemma~\ref{rb of tangent cone}, whose proof we can finally provide.

\begin{proof}[Proof of Lemma~\ref{rb of tangent cone}]

We are given $x\in\overline\M_i$ and $v\in \rb \T_{\overline\M_i}(x)$.  We must show there exists another subdomain $\M_j\subseteq\overline\M_i$ with $x\in\overline\M_j$ and $v\in\T_{\overline\M_j}(x)$.  Note that the relative boundary $\rb \T_{\overline{\M}_i}(x)$ can be not empty only if $d_i>0$, so without loss of generality, we can assume $d_i=N$.  By (\ref{defn of tangent}), there exists a sequence $t_n\searrow 0$ so that $\frac{x+t_nv-x_n}{t_n}\to 0$ where $x_n=\proj_{\overline M_i}(x+t_nv)$ (the closest point is unique for large $n$ by proximal smoothness).  The rest of the proof is simple if $x+t_nv\notin \M_i$ for infinitely many $n$.  Indeed, in that case $x_n\in\bdry\M_i$, and since there are only finite many subdomains in the stratification, there exists an index $j$ so that $x_n\in\M_j$ for infinitely many $n$.  It is clear then that $x\in\overline\M_j$ and $v\in\T_{\overline\M_j}(x)$.  The other case is where $x_n=x+t_nv\in\M_i$ for all large $n$.  The proof here is considerably more difficult and we resort to the representation (\ref{lower C^2}) that is available for $\overline\M_i$.  Changing coordinates, we can assume without loss of generality that $x=0$ and that $\overline\M_i=\epi g$ (locally) where $g$ is of the form (\ref{lower C^2}).  For $z\in\R^N$, we write $z=(z^{N-1},z^{N})\in\R^{N-1}\times \R$, and with this notation, are given $x_n=t_n v$ satisfying
\begin{equation}\label{convergence}
x_n^{N-1}= {t_n}\,v^{N-1}\quad \text{and}\quad \frac{x_n^{N}}{t_n}\to v^N.
\end{equation}
The assumption $v\in\bdry\T_{\overline\M_i}(x)$ implies $v^N=g'(0;v^{N-1})$ by (\ref{bdry of tangent}).  From Danskin's formula (\ref{Danskin}), there exists $a\in A(0)$ for which
\begin{equation*}\label{dir derivative attained}
v^N=g'(0;v^{N-1})=\langle \nabla g_a(0),v^{N-1}\rangle.
\end{equation*}
We calculate using this that
\begin{eqnarray}
v^{N}&=&\langle \nabla g_a(0),v^{N-1}\rangle= \lim_{n\to\infty} \frac{g_a\bigl(t_n v^{N-1}\bigr)}{t_n}\nonumber\\
&\leq& \lim_{n\to\infty} \frac{g\bigl(t_n v_n^{N-1}\bigr)}{t_n}\nonumber\\
&\leq& \lim_{n\to\infty} \frac{x_n^N}{t_n}= v^N.
 \label{maxed} 
\end{eqnarray}
where the inequality in (\ref{maxed}) follows since we are assuming $x_n\in\M_i$ (and thus $x_n^{N}>g\left(x_n^{N-1}\right)$), and equality just restates (\ref{convergence}).  Since $\bigl(x_n^N,g(x_n^N)\bigr)\in\bdry\overline\M_i$ for each $n$, there exists an index $j$ so that $\M_j$ contains infinitely many of them.  Therefore $0\in\overline\M_j$ and $v\in\T_{\overline\M_j}(0)$.  Since $\M_j\subseteq\overline\M_i$ and $\M_j\cap\M_i=\emptyset$, we must have $d_j<d_i$, and the proof is complete.
\end{proof}

\bibliographystyle{plain}
\bibliography{newreferences}

\begin{thebibliography}{10}

\bibitem{BH07}
A.~Bressan and Y.~Hong.
\newblock Optimal control problems on stratified domains.
\newblock {\em Networks and Heterogeneous Media}, 2(2):313--331, 2007.

\bibitem{CamSci03}
F.~Camilli and A.~Siconolfi.
\newblock Hamilton-jacobi equations with measurable dependence on the state
  variable.
\newblock {\em Advances in Differential Equations}, 8(6):733--768, 2003.

\bibitem{FHC75}
{F. H.} Clarke.
\newblock Generalized gradients and applications.
\newblock {\em Transactions of the AMS.}, 205:247--262, 1975.

\bibitem{CSW95}
F.~H. Clarke, R.~J. Stern, and P.~R. Wolenski.
\newblock Proximal smoothness and the lower-{$C^2$} property.
\newblock {\em J. Convex Analysis}, 2(1-2):117--144, 1995.

\bibitem{FHC83}
F.H. Clarke.
\newblock {\em {Optimization and Nonsmooth Analysis}}.
\newblock Society for Industrial Mathematics, 1990.

\bibitem{CLSW95}
{F.H.} Clarke, {Yu.S.} Ledyaev, {R.J.} Stern, and {P.R.} Wolenski.
\newblock Qualitative properties of trajectories of control systems.
\newblock {\em J. of Dynamical and Control Systems}, 1(1):1--48, 1995.

\bibitem{BOOK98}
{F.H.} Clarke, {Yu.S.} Ledyaev, {R.J.} Stern, and {P.R.} Wolenski.
\newblock {\em Nonsmooth Analysis and Control Theory}.
\newblock Springer, 1998.

\bibitem{CW96}
{F.H.} Clarke and {P.R.} Wolenski.
\newblock Control of systems to sets and their interiors.
\newblock {\em J. of Optimization Theory and Applications}, 88:3--23, 1996.

\bibitem{DRW05}
T.~Donchev, V.~Rios, and P.R. Wolenski.
\newblock Strong invariance and one-sided lipschitz multifunctions.
\newblock {\em Nonlinear Analysis: Theory, Methods and Applications},
  60(5):849--862, 2005.

\bibitem{Fed59}
H.~Federer.
\newblock Curvature measures.
\newblock {\em Transactions of the AMS}, 93:418--491, 1959.

\bibitem{Kra95}
Mikhail Krastanov.
\newblock Forward invariant sets, homogeneity and small-time local
  controllability.
\newblock In {\em Geometry in nonlinear control and differential inclusions
  ({W}arsaw, 1993)}, volume~32 of {\em Banach Center Publ.}, pages 287--300.
  Polish Acad. Sci., Warsaw, 1995.

\bibitem{RTRconvex70}
{R.T.} Rockafellar.
\newblock {\em {Convex Analysis}}.
\newblock Princeton University Press, 1970.

\bibitem{Rock79}
{R.T.} Rockafellar.
\newblock Clarke's tangent cones and the boundaries of closed sets in $r^n$.
\newblock {\em Nonlinear Analysis: Theory, Methods and Applications},
  3:145--154, 1979.

\bibitem{Rock82}
{R.T.} Rockafellar.
\newblock Favorable classes of lipschitz-continuous functions in subgradient
  optimization.
\newblock {\em Progress in Nondifferentiable optimization}, pages 125--143,
  1982.

\bibitem{RW98}
{R.T.} Rockafellar and {R.J-B.} Wets.
\newblock {\em {Variational Analysis}}.
\newblock Springer-Verlag, 1997.

\bibitem{SWZ11}
O.~Serea, {P.R.} Wolenski, and H.~Zidani.
\newblock Hamilton-jacobi theory for reflection problems.
\newblock In Preparation.

\bibitem{Sor02}
P.~Soravia.
\newblock Boundary value problems for hamilton-jacobi equations with
  discontinuous lagrangian.
\newblock {\em Indiana University Math Journal}, 51(2):451--477, 2002.

\bibitem{W90}
{P.R.} Wolenski.
\newblock A uniqueness theorem for differential inclusions.
\newblock {\em J. Differential Equations}, 84:165--82, 1990.

\bibitem{WZ98}
{P.R.} Wolenski and Y.~Zhuang.
\newblock Proximal analysis and the minimal time function.
\newblock {\em SIAM J. Control Optimization}, 35(3):1048--1072, 1998.

\bibitem{WZ11}
P.R. Wolenski and H.~Zidani.
\newblock Hamilton-jacobi theory for state constrained problems.
\newblock In Preparation.

\end{thebibliography}

\end{document}